\newcommand{\codim}{\rm codim\,}
\newcommand{\e}{\varepsilon}
\newcommand{\tmop}[1]{\ensuremath{\operatorname{#1}}}
\renewcommand{\Re}{\tmop{Re}}
\newcommand{\C}{\mathbb C}
\newcommand{\CP}{\mathbb{CP}}
\newcommand{\R}{\mathbb R}
\newcommand{\del}{\mathbb \partial}
\newtheorem{theorem}{Theorem}
\newtheorem{prop}[theorem]{Proposition}
\newtheorem{defn}[theorem]{Definition}
\theoremstyle{remark}
\begin{document}
\title{Levi-flats in $\CP^n$: a survey for nonexperts} 
\author{Rasul Shafikov}
\begin{abstract}
This survey paper, aimed at nonexperts in the field, explores various proofs of nonexistence of real analytic Levi-flat hypersurfaces in $\CP^n$, $n>2$. Some generalizations and other related results are also discussed. 
\end{abstract}
\address{Department of Mathematics, the University of Western Ontario, London ON, Canada, N6A 5B7}
\email{shafikov@uwo.ca}
\maketitle

An intriguing open problem in complex geometry is to construct an example or prove the nonexistence of a real analytic (or smooth), 
closed (i.e, compact without boundary), Levi-flat hypersurface in the complex projective plane $\mathbb{CP}^2$. 
This question appeared in the context of a more general problem of the existence of nontrivial minimal sets, see Section~1 for details. Nonexistence of such Levi-flats in 
$\CP^n$ for $n>2$ was proven by several authors, with many generalizations to a more general class of complex manifolds of dimension 
at least 3. The open problem in $\CP^2$ is particularly interesting because it has a natural formulation from many different 
points of view: in the theory of holomorphic foliations as a hypothetical nontrivial minimal set, in several complex variables as a common boundary of two Stein manifolds, and in symplectic geometry, for example, 
 as a limit of Stein fillings of two contact structures corresponding to opposite orientations on the same manifold. 
The purpose of this survey is to collect all relevant information concerning Levi-flat hypersurfaces in projective spaces, some of which exists only as folklore, and outline three (mostly) self-contained proofs of the nonexistence results in $\CP^n$, $n>2$. Two of the three proofs are due to Lins Neto~\cite{LN}; the third one is due to Siu~\cite{Siu}. These proofs are genuinely different, and it is remarkable that all of them fail for $n=2$ for different reasons. 

Although the question makes sense in the smooth category, the emphasis will be on the real analytic case, as it is the most natural formulation 
from the point of view of the holomorphic foliation theory and less technical. The survey is intended for mathematicians from different areas 
of mathematics, so we include some basic foundational material. For a comprehensive overview of the theory of Levi-flat hypersurfaces in complex geometry see a more technical survey by Ohsawa~\cite{Oh4}.

In the first section we discuss generalities of singular holomorphic foliations on projective spaces. In Section~2 
we discuss Levi-flat hypersurfaces, followed by the proofs of nonexistence in Section~3. In the last section we compile some generalizations.

\section{Holomorphic Foliations}
Classical theory of foliations can be found in~\cite{CS}, a comprehensive reference is~\cite{CC, CC2}. For an in-depth discussion of important results and open problems in foliation theory until 1974, see~\cite{La}.
In this section we give general background material on (singular) holomorphic foliations that will be needed for the proofs.

Loosely speaking, a foliation $\mathcal F$ of dimension $k$ (or codimension $n-k$) on an $n$-dimensional manifold $X$ is a decomposition of $X$ into connected manifolds $L_\nu$, $\nu\in \Lambda$, of 
dimension $k$, called the leaves of $\mathcal F$, such that in a coordinate chart 
$(U, \phi)$ of $X$,  $\phi (U) =D^k \times D^{n-k} \subset \R^k \times \R^{n-k}$, each $L_\nu \cap U$ is an at most countable 
union of the sets of the form $D^k \times \{x\}$, $x\in D_{n-k}$. The sets $\phi^{-1}(D^k \times \{x\})$ are called the plaques 
of $U$. On the overlaps the requirement is that 
the transition map has the form
\begin{equation}\label{e.foli}
h(x,y) = (h_1(x,y), h_2(y)), \ (x,y) \in \R^k \times \R^{n-k} .
\end{equation}
The smoothness of $h$ determines the smoothness of the foliation: $C^k$-, $C^\infty$-smooth, or real analytic.
Holomorphic foliations are defined in a similar way. 

For codimension 1 foliations, the local representation in a coordinate chart gives rise to a local first integral: 
a nonconstant smooth function that is constant on the leaves (plaques) of the foliation. If the foliation is holomorphic (resp. real analytic), then the first integral can be chosen to be holomorphic (resp. real analytic).

For an open set  $U\subset \C^n$, $n>1$, denote by $\Omega^1(U)$ the space holomorphic 1-forms, i.e,  $\omega \in \Omega^1(U) $ if 
$\omega=\sum_{k=1}^n a_k(z)dz_k$ with  $a_k(z) \in \mathcal O(U)$. Let $\omega \in \Omega^1(U) $ and 
$$
Y = \{\omega=0\}=\{ z \in U : a_1(z)=\dots=a_n(z) = 0\}. 
$$
Then $\text{ker}\,\omega(z) \subset T_z U$ defines a distribution of complex hyperplanes on $U\setminus Y$. If $\omega$ is integrable, i.e., 
if $\omega \wedge d\omega = 0$, then the distribution $\text{ker}\,\omega(z)$ is involutive, and so by the Frobenius theorem 
$U\setminus Y$ is foliated by submanifolds that are tangent to  $\text{ker}\,\omega(z)$ at every point. By the Levi-Civita theorem, the leaves of the foliation are complex manifolds of dimension $n-1$. If $\dim Y = n-1$, we may divide all $a_k(z)$ by an appropriate function vanishing  on $Y$, and so 
we may assume that $\dim Y < n-1$. Thus, we obtain on $U$ a singular holomorphic foliation $\mathcal F$ whose singular locus
$\text{Sing}\,\mathcal F :=Y$ has (complex) codimension at least 2. 

Let now $X$ be a complex manifold, $\dim X >1$, with an open cover $\cup_{j\in J} U_j=X$ by open sets. Suppose that for every $U_j$ there is 
an integrable 1-form $\omega_j\in \Omega^1(U_j)$, $\codim\{\omega_j =0\}\ge 2$, such that if $U_j \cap U_k \ne \varnothing$, then $\omega_j = g_{jk}\omega_k$ in
$U_j \cap U_k$ and $g_{jk} \in \mathcal O^*(U_j \cap U_l)$ (nonvanishing holomorphic function). The local foliations in $U_j$ and their
singularities glue together and we obtain a singular holomorphic foliation $\mathcal F$ on $X$ of codimension $1$ with a singular locus of codimension at least~2. Note that closed leaves of $\mathcal F$ and $\text{Sing}\,\mathcal F$ are (closed) complex subvarieties of $X$. 
The cocycle $\{g_{jk}\} \subset \mathcal O^*(U_j \cap U_k)$ defines a line bundle on $X$, called the normal bundle of $\mathcal F$ and denoted by~$N_{\mathcal F}$.  

A holomorphic foliation of codimension 1 can be defined via a holomorphic submersion into a Riemann surface; the leaves of the foliation are then the level sets of the submersion. 
In dimension 2 a foliation can also be defined by integral curves of (locally defined, singular) holomorphic vector fields. 
This is equivalent to the description by 1-forms, indeed, a foliation defined in local coordinates by a holomorphic form $\omega = P(z) dz_2 - Q(z) dz_1$ can also be defined by a vector field $V = P(z)\frac{\partial}{\partial z_1} + Q(z)\frac{\partial}{\partial z_2}$.

Consider now a special case of a singular holomorphic foliation $\mathcal F$ on the complex projective space $\CP^n$. If $\pi: \C^{n+1} \to \CP^n$ is the natural projection, then  $\pi^* ({\mathcal F})=\tilde{\mathcal F}$ is a foliation on $\C^{n+1}$. It is known 
(e.g.,~\cite[p.~577]{CLN6}) that $\tilde{\mathcal F}$
is defined by just one integrable holomorphic 1-form on $\C^{n+1}$
$$
\tilde\omega = \sum_{j=0}^n a_j(z) dz_j, 
$$
where $a_j(z)$ are homogeneous polynomials of the same degree $d+1$ that in addition satisfy the so-called Euler condition
$$
\sum_{j=0}^n z_j a_j(z) \equiv 0.
$$
The integer $d\ge 0$ is called the degree of $\mathcal F$. One can show that $d$ is equal to the number of points (counting with multiplicity) where a generic $\CP^1 \subset \mathbb \CP^n$ is tangent to the leaves of $\mathcal F$. For a specific example, consider the union of all complex lines in $\C^2$ passing through the origin. This gives a foliation on $\CP^2$ singular at the origin in $\C^2$. The corresponding 1-form $\tilde\omega$ in $\C^3$
is $z_1 dz_2 - z_2 dz_1$ in coordinates $(z_0, z_1, z_2)$. Thus, the degree of this foliation is  0. In general, foliations of degree 0 and 1 always contain
algebraic leaves, but for any $d>2$ there exist foliations that contain no algebraic leaves and every leaf is dense in $\CP^n$, 
see~\cite{Jo}.

\bigskip

A holomorphic foliation $\mathcal F$ on $\CP^2$ can be given both in terms of holomorphic 1-forms and holomorphic vector fields 
and has at most a finite set of singular points. Let $p\in \text{Sing}\,\mathcal F$ and let $V$ be a holomorphic vector field in a neighbourhood of $p$ that 
defines $\mathcal F$ there. Let $A = DV(p)$ be the linear part of $V$ at $p$, and assume that $DV(p)$ is nondegenerate. Then the Baum-Bott index of $\mathcal F$ at $p$ is  a complex number defined as 
$$
BB(\mathcal F, p) = \frac{(\text{trace A})^2}{\text{det}A}.
$$
As a basic example consider a vector field V in $\mathbb C^2$ such that $V(0)=0$ and $DV(0)$ is nondegenerate. Assume in addition that the foliation $\mathcal F$ generated by $V$ admits a holomorphic first integral $f$, which means that $V(f) =0$, and also assume that $df(0)=0$, and $(D^2f)(0)$, the matrix of second order derivatives, is nondegenerate. Such singularities of $\mathcal F$ are called Morse-type. Then, by the holomorphic Morse lemma (see, e.g.,~\cite{Zo}), there exists a holomorphic coordinate system
$z=(z_1,z_2)$ such that $f(z)=z_1 z_2$. If $V(z) = P(z) \frac{\partial}{\del z_1}+Q(z)\frac{\del}{\del z_2}$, then
$V(f) = z_2 P(z) + z_1 Q(z) \equiv 0$, which implies   
$$
V (z) = \tilde f(z) \left(z_1\frac{\partial}{\del z_1}-z_2\frac{\del}{\del z_2} \right), \ \tilde f(0)\ne 0.
$$
In particular, this means that $BB(\mathcal F, 0) = 0$. 

The following result for  holomorphic foliations in projective spaces is a consequence of a general result of
Baum-Bott~\cite{BB}.

\begin{prop}[Baum-Bott Index formula]\label{p.bb}
Let $\mathcal F$ be a holomorphic foliation of degree $d$ on $\CP^2$. Then
\begin{equation}\label{e.bb}
\sum_{p \in \text{Sing}\,\mathcal F} BB(\mathcal F, p) = (d+2)^2.
\end{equation}
\end{prop}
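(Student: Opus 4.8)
The plan is to derive~\eqref{e.bb} as the two-dimensional case of the general Baum--Bott residue theorem~\cite{BB}, the only ingredient genuinely special to $\CP^2$ being a computation of the normal bundle $N_{\mathcal F}$. In the form needed here, Baum--Bott's theorem asserts that for a one-dimensional holomorphic foliation $\mathcal F$ with isolated singularities on a compact complex surface $X$,
$$
\sum_{p\in\text{Sing}\,\mathcal F}\text{Res}_p(\mathcal F)=\int_X c_1(N_{\mathcal F})^2 ,
$$
where $\text{Res}_p(\mathcal F)$ is a local residue attached to the singular point $p$. First I would recall the mechanism behind this localization: by Bott's vanishing theorem the line bundle $N_{\mathcal F}$ carries, over $X\setminus\text{Sing}\,\mathcal F$, a connection (a Bott connection) whose curvature $2$-form $\Theta$ satisfies $\Theta\wedge\Theta=0$, so the Chern--Weil representative $\bigl(\tfrac{i}{2\pi}\bigr)^{2}\Theta\wedge\Theta$ of $c_1(N_{\mathcal F})^2$ vanishes identically off $\text{Sing}\,\mathcal F$; hence $\int_X c_1(N_{\mathcal F})^2$ reduces to a sum of contributions concentrated at the finitely many singular points. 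Next I would evaluate $\text{Res}_p(\mathcal F)$ when $A=DV(p)$ is nondegenerate: passing to the linear model $V=\lambda_1 z_1\,\partial_{z_1}+\lambda_2 z_2\,\partial_{z_2}$, with $\lambda_1,\lambda_2$ the eigenvalues of $A$, and computing the resulting Grothendieck residue yields
$$
\text{Res}_p(\mathcal F)=\frac{(\lambda_1+\lambda_2)^2}{\lambda_1\lambda_2}=\frac{(\text{trace}\,A)^2}{\det A}=BB(\mathcal F,p).
$$

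It then remains to identify $N_{\mathcal F}$ for a foliation of degree $d$ on $\CP^2$, and the claim is that $N_{\mathcal F}\cong\OO_{\CP^2}(d+2)$. To see this, restrict $\mathcal F$ to a generic line $\ell\cong\CP^1\subset\CP^2$ --- generic, so that $\ell$ avoids $\text{Sing}\,\mathcal F$ and is not a leaf. The composition of the inclusion $T\ell\hookrightarrow T\CP^2|_\ell$ with the projection $T\CP^2|_\ell\to N_{\mathcal F}|_\ell$ is a nonzero holomorphic section of $T^{*}\ell\otimes N_{\mathcal F}|_\ell$ whose zero divisor is precisely the set of tangencies of $\ell$ with the leaves of $\mathcal F$; by the definition of the degree recalled above this divisor has degree $d$, so
$$
d=\deg\bigl(T^{*}\ell\otimes N_{\mathcal F}|_\ell\bigr)=\deg N_{\mathcal F}-2 ,
$$
and hence $N_{\mathcal F}\cong\OO_{\CP^2}(d+2)$, a line bundle on $\CP^2$ being determined by its degree. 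Therefore $\int_{\CP^2}c_1(N_{\mathcal F})^2=(d+2)^2$, which together with the two displays of the previous paragraph gives $\sum_{p}BB(\mathcal F,p)=(d+2)^2$. As a consistency check, the degree $0$ pencil of all complex lines through a point has a single singularity, at which a local defining vector field is the radial one, so $\text{trace}=2$, $\det=1$, and the left-hand side equals $4=(0+2)^2$.

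The step I expect to be the main obstacle is the localization-and-residue computation of the first paragraph: exhibiting $c_1(N_{\mathcal F})^2$ rigorously as a sum of terms concentrated on $\text{Sing}\,\mathcal F$ and evaluating each of them as $(\text{trace}\,A)^2/\det A$ is precisely the content of Baum--Bott theory, so in a survey it is cleanest to quote~\cite{BB} for it and carry out only the normal bundle computation in detail. Two remarks are in order. First, as written, \eqref{e.bb} presumes that every singularity is nondegenerate, so that each $BB(\mathcal F,p)$ is defined by the formula given in the text; without that assumption the left-hand side should be understood as a sum of Baum--Bott residues. Second, there is a more elementary route that avoids Chern--Weil theory: choose an affine chart $\C^2\subset\CP^2$ whose line at infinity is neither $\mathcal F$-invariant nor passes through a singular point, write $\mathcal F|_{\C^2}$ as $V=P\,\partial_{z_1}+Q\,\partial_{z_2}$ with $P,Q$ polynomials, so that $\text{Sing}\,\mathcal F$ is the finite common zero set of $P$ and $Q$ and
$$
\sum_{p}BB(\mathcal F,p)=\sum_{p}\text{Res}_p\!\left[\frac{(\operatorname{div} V)^2\,dz_1\wedge dz_2}{P,\ Q}\right] ;
$$
the global (Euler--Jacobi) residue theorem then evaluates this sum from the top-degree homogeneous parts of $P$ and $Q$, which carry the degree $d$, and again produces $(d+2)^2$.
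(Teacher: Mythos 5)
Your proposal is correct, but note that the paper does not actually prove Proposition~\ref{p.bb}: it records the statement as a consequence of the general Baum--Bott theorem~\cite{BB} and refers to Lins Neto~\cite{LN1} for the proof. What you do differently is supply the missing argument at exactly the level of detail a survey could afford: you quote the localization statement $\sum_p \mathrm{Res}_p(\mathcal F)=\int_X c_1(N_{\mathcal F})^2$ from~\cite{BB} (consistent with the paper's own framing), evaluate the residue at a nondegenerate singularity as $(\mathrm{trace}\,A)^2/\det A$, and carry out the one genuinely $\CP^2$-specific step, the identification $N_{\mathcal F}\cong\mathcal O_{\CP^2}(d+2)$ via the tangency divisor of a generic line; this is the standard modern route (as in Brunella's work cited in the paper) and buys a transparent explanation of why the answer is $(d+2)^2$, whereas the paper's citation buys brevity. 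Your bookkeeping is right: the tangency map is a section of $T^*\ell\otimes N_{\mathcal F}|_\ell$, so $d=\deg N_{\mathcal F}-2$, and the paper does state (without proof) the equivalence of its degree definition with the tangency count, so you may legitimately invoke it. Two small refinements: at a nondegenerate but resonant singularity you cannot literally pass to the linear model, so it is cleaner to invoke the evaluation formula for a Grothendieck residue at a simple zero, $\mathrm{Res}_p\bigl[h\,dz_1\wedge dz_2/(P,Q)\bigr]=h(p)/\det DV(p)$ with $h=(\operatorname{div}V)^2$, which gives the same answer without any normal form; and in your alternative Euler--Jacobi route the numerator has degree exactly $\deg P+\deg Q-2$, the borderline case where the plain Euler--Jacobi vanishing does not apply, so one must use the refined residue-at-infinity computation together with the fact that the top-degree parts of $P,Q$ are $z_1h$, $z_2h$ for a common homogeneous $h$ of degree $d$ (this is where non-invariance of the line at infinity enters) --- that is essentially the computation in~\cite{LN1}. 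Your caveat that~\eqref{e.bb} presupposes nondegenerate singularities (or else should be read with Baum--Bott residues on the left) is apt, since the paper defines $BB(\mathcal F,p)$ only when $DV(p)$ is nondegenerate.
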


The proof can be found in Lins Neto~\cite{LN1}. For further connections of the Baum-Bott index and other indices see Brunella~\cite{Br}.

In particular, since the right-hand side in~\eqref{e.bb} is always positive, it immediately follows that there do not exist nonsingular holomorphic foliations on $\CP^2$. Furthermore, the following holds.

\begin{prop}[Lins Neto~\cite{LN}]\label{p.cod2}
For any holomorphic foliation on $\CP^n$, $n>1$, the singular locus $\text{Sing}\,\mathcal F$ contains an irreducible component 
of codimension~2.
\end{prop}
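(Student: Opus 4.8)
The plan is to reduce the statement to the Baum--Bott formula on $\CP^2$ (Proposition~\ref{p.bb}). Since $\text{Sing}\,\mathcal F$ is by construction a subvariety of codimension $\ge 2$, it suffices to rule out that all of its irreducible components have codimension $\ge 3$, and for this it is in turn enough to show that $\text{Sing}\,\mathcal F$ meets a generic linear $2$-plane $P\cong\CP^2\subset\CP^n$ — because a generic $2$-plane is disjoint from every subvariety of $\CP^n$ of dimension $\le n-3$, so if it hits $\text{Sing}\,\mathcal F$ then some component of $\text{Sing}\,\mathcal F$ must have dimension $\ge n-2$, i.e.\ codimension exactly $2$. (For $n=2$ there is nothing to prove: $P=\CP^2$, and $\text{Sing}\,\mathcal F\neq\varnothing$ is already contained in Proposition~\ref{p.bb}, since $(d+2)^2>0$.) So I would fix $n\ge 3$ and argue by contradiction, assuming a generic $2$-plane $P$ to be disjoint from $\text{Sing}\,\mathcal F$.

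Under this assumption the restriction $\mathcal F':=\mathcal F|_P$ is a holomorphic foliation of $P\cong\CP^2$ of some degree $d'\ge 0$ (for a generic $P$ the restricted $1$-form is not identically zero and, after dividing out a possible common factor of its coefficients, defines $\mathcal F'$). By Proposition~\ref{p.bb}, $\mathcal F'$ has singular points, with $\sum_{q}BB(\mathcal F',q)=(d'+2)^2>0$. Now every $q\in\text{Sing}\,\mathcal F'$ lies outside $\text{Sing}\,\mathcal F$, so near $q$ the foliation $\mathcal F$ has a holomorphic first integral $f$ with $df(q)\neq 0$; since $\mathcal F'$ is defined near $q$ by $\omega|_P$ with $\omega=u\,df$ for a unit $u$, the condition $q\in\text{Sing}\,\mathcal F'$ means $T_qP\subset\ker df(q)$, i.e.\ $P$ is tangent to the leaf of $\mathcal F$ through $q$, and $f|_P$ is a holomorphic first integral of $\mathcal F'$ with a critical point at $q$. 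In an affine chart with $q=0$ and $P$ linear, the Hessian of $f|_P$ at $q$ is $A^{\top}(D^2f)(q)\,A$, where $A$ is a matrix whose columns span the $2$-plane $T_qP$, which by the tangency condition lies in $\ker df(q)$; I expect that for a generic plane this matrix is nondegenerate, so that $q$ is a \emph{Morse-type} singularity of $\mathcal F'$ — and then the computation preceding Proposition~\ref{p.bb} gives $BB(\mathcal F',q)=0$. If every singular point of $\mathcal F'$ contributes $0$, the Baum--Bott sum would be $0\neq(d'+2)^2$, the desired contradiction.

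The main obstacle is exactly this genericity step: showing that the (finitely many) tangency points of a generic $2$-plane with $\mathcal F$ are Morse-type singularities of $\mathcal F'$. This should hold provided the second fundamental form $(D^2f)|_{\ker df}$ of the leaves of $\mathcal F$ has rank $\ge 2$ at some point, in which case a generic $P$ can be chosen so that each of its tangency points is nondegenerate. The remaining, exceptional case is that this second fundamental form has rank $\le 1$ everywhere, i.e.\ the leaves of $\mathcal F$ are developable; here I would argue separately that such an $\mathcal F$ is the pull-back of a foliation $\mathcal G$ of $\CP^2$ under a linear projection, so that the preimages of the points of $\text{Sing}\,\mathcal G$ already furnish a codimension-$2$ component of $\text{Sing}\,\mathcal F$. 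Making this dichotomy precise — in particular proving that developable leaves force the linear-pull-back structure — is the part I expect to require the most care; the rest is bookkeeping with the Baum--Bott index and elementary intersection theory on the Grassmannian of $2$-planes.
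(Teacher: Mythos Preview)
Your overall strategy---restrict to a generic linear $E\cong\CP^2$ disjoint from $\text{Sing}\,\mathcal F$, note that the singularities of the induced foliation $\mathcal G=\iota^*\mathcal F$ are exactly the tangency points and therefore admit local holomorphic first integrals, arrange them to be Morse-type so that each $BB(\mathcal G,p)=0$, and contradict~\eqref{e.bb}---is precisely the paper's proof. The only difference is how you secure the Morse condition. The paper does it in one line: after choosing $E$ with the three transversality properties (whose density is taken from \cite{CLNS,Mi2}), it simply asserts that \emph{after an additional small perturbation of $E$} the Hessian $D^2g(p)$ is nondegenerate. Your rank dichotomy, with a separate ``developable leaves'' branch and the structural claim that such an $\mathcal F$ must be a linear pull-back from $\CP^2$, is a substantial detour: that pull-back statement is a theorem in its own right, you do not prove it, and it is not needed for the argument. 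You can drop the dichotomy and follow the paper's direct perturbation.
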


\begin{proof}
The result holds for $n=2$ as observed above. Suppose that $n>2$ and $\mathcal F$ is a holomorphic foliation on $\CP^n$ with 
$\codim \text{Sing}\,\mathcal F >2$. Then there exists a generic 2-plane $E = \CP^2 \subset \CP^n$ such that 
$E \cap \text{Sing}\,\mathcal F =\varnothing$, $E$ is not contained in any leaf of the foliation $\mathcal F$, 
and the set of tangencies of 
$E$ with the leaves of $\mathcal F$ has codimension 2 in $E$. In fact, one can show (see~\cite{CLNS} or~\cite{Mi2}) 
that the set of $E$ with these properties 
is dense in the Grassmannian $\text{Gr}_{\mathbb C}(3,n+1)$. The inclusion map $\iota: E \subset \CP^n$ induces a holomorphic foliation 
$\mathcal G = \iota^*(\mathcal F)$ on $E$.

The foliation $\mathcal G$ is singular at points where $E$ is tangent to $\mathcal F$, and so by the assumption on $E$,  
$\text{Sing}\,\mathcal G$ is a finite set. Let $p\in \text{Sing}\,\mathcal G$. Since $p$ is a regular point of $\mathcal F$, 
the foliation $\mathcal F$ admits a local first integral, say $f$, and therefore, $g = f \circ \iota$ is a first integral for $\mathcal G$.
But since $p$ is a singular point of $\mathcal G$, $dg(p) =0$. After an additional small perturbation of $E$ we may assume that $D^2g(p)$ is nondegenerate, and therefore, $p$ is a Morse-type singularity of $\mathcal G$. This implies that $BB(\mathcal G, p)=0$,
and we obtain a contradiction with the index formula~\eqref{e.bb}.
\end{proof}

Recall that a smooth real-valued function $\rho$ on a complex manifold $X$ is called strictly plurisubharmonic if 
$i\partial\overline\partial\,\rho$ is a positive-definite $(1,1)$-form. This is equivalent to the condition that  
the complex Hessian defined by~\eqref{e.levi} is positive-definite for all $p\in X$ and $w\in T_p X \setminus \{0\}$.
A complex manifold $X$ is called Stein if one of the following equivalent conditions hold:

(i) $X$ is holomorphically convex and holomorphically separable (see, e.g.,~\cite{For} for relevant definitions);

(ii) $X$ admits a strictly plurisubharmonic exhaustion function $\rho$, i.e., $\rho (-\infty, c]$ is compact for any $c\in \mathbb R$; 

(iii) $X$ admits a proper holomorphic embedding into $\C^N$ for some $N>0$.

\noindent For our purposes only properties (ii) and (iii) will be relevant. A domain 
$D\subset \C^n$ is called locally Stein if for any point $p\in b D$ there is a neighbourhood $U_p$ such that 
$U_p \cap D$ is Stein. The solution of the Levi problem for domains in $\mathbb C^n$ implies that if an (open) 
domain $D \subset \mathbb C^n$ is locally Stein, then it is Stein. 

We conclude this section with a brief discussion of minimal sets. Given a (smooth) foliation $\mathcal F$ on a (smooth) 
manifold~$X$, a subset $Y$ of $X$ is called invariant or saturated with respect to $\mathcal F$ if for every point $p\in Y$ 
we have $L_p \subset Y$, where $L_p$ is the leaf of $\mathcal F$ passing through $p$. A set $Y\subset X$ is called minimal 
if $Y$ is nonempty, closed, invariant with respect to $\mathcal F$, and satisfies the following property: if $Y' \subset Y$ is 
closed and invariant, then either $Y' = \varnothing$ or $Y' = Y$. 
For example, a closed leaf is a minimal set. The closure of a (nonclosed) leaf is an invariant set but may not be minimal. 
It is not hard to see that if a minimal set $Y$ contains an open subset of $X$, then $Y=X$, 
and if  $X$ is compact, then any foliation $\mathcal F$ on $X$ has a nontrivial minimal set. In general, the geometry of 
a minimal set can be complicated, for example, one may construct an example of a minimal set $Y$ 
of a codimension 1  foliation $\mathcal F$ with the property that if  $\gamma$ is a real arc transverse to $\mathcal F$ such that 
$\gamma$ has a nonempty intersection with $Y$, then $Y\cap \gamma$ is homeomorphic to the Cantor set.

In the context of singular holomorphic foliations one can impose an additional condition: a minimal set $Y$ of a singular holomorphic foliations $\mathcal F$ is called nontrivial if $Y \cap \,\text{Sing}\,\mathcal F = \varnothing$. 
Clearly, a nontrivial minimal set exists iff there exists a leaf of the foliation whose closure does not contain any singular point of 
$\mathcal F$. Camacho, Lins Neto, and Sad~\cite{CLNS1} studied nontrivial minimal sets for singular holomorphic foliations 
on $\CP^2$ and, in particular, proved the following.

\begin{theorem}\label{t.min}
Let $\mathcal F$ be a singular holomorphic foliation on $\CP^2$. Suppose that a nontrivial minimal set $Y$ exists. Then

(i) $Y$ is unique;

(ii) every one dimensional algebraic variety in $\CP^2$ intersects $Y$.

(iii) if $L$ is a leaf of $\mathcal F$ such that $\overline L \cap \text{Sing}\,\mathcal F = \varnothing$, then $L$ accumulates at $Y$, i.e., $\overline L \setminus L = Y$.

\end{theorem}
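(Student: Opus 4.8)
The plan is to run everything off a single principle: a strictly plurisubharmonic function cannot attain an interior local maximum along a leaf. Two preliminary observations. First, if $Y_1,Y_2$ are nontrivial minimal sets with $Y_1\cap Y_2\ne\varnothing$, then $Y_1\cap Y_2$ is a nonempty closed invariant subset of each, so $Y_1=Y_1\cap Y_2=Y_2$; hence distinct nontrivial minimal sets are disjoint, each is compact (being closed in $\CP^2$), and each misses $\text{Sing}\,\mathcal F$. Second, a compact leaf is an invariant algebraic curve $C$, and the Camacho--Sad index formula $\sum_{p\in C\cap\text{Sing}\,\mathcal F}\mathrm{CS}(\mathcal F,C,p)=C\cdot C=(\deg C)^2>0$ forces $C\cap\text{Sing}\,\mathcal F\ne\varnothing$; hence no leaf whose closure avoids $\text{Sing}\,\mathcal F$ is compact. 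In particular $Y$ is not a compact leaf, and, by minimality, the closure of any leaf contained in $Y$ equals $Y$.

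To prove (ii), suppose some algebraic curve $C\subset\CP^2$ is disjoint from $Y$. Write $C=\{s=0\}$ for a holomorphic section $s$ of the line bundle $\OO(\deg C)$, and fix a positively curved Hermitian metric $h$ on $\OO(\deg C)$ (one exists since $\deg C\ge 1$). Then $\phi:=-\log|s|_h^2$ is smooth on $\CP^2\setminus C$, is bounded below, tends to $+\infty$ along $C$, and satisfies $i\partial\dbar\phi>0$ off $C$ (there it equals the curvature form of $h$); thus $\phi$ is a strictly plurisubharmonic exhaustion, and $\CP^2\setminus C$ is Stein. Since $Y$ is a compact invariant subset of $\CP^2\setminus C$, the function $\phi|_Y$ attains its maximum at some $q\in Y$; the leaf $L_q$ through $q$ is contained in $Y$, so $\phi\le\phi(q)$ on $L_q$. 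But restricting the positive $(1,1)$-form $i\partial\dbar\phi$ to the complex tangent line of $L_q$ shows that $\phi|_{L_q}$ is strictly subharmonic on the Riemann surface $L_q$, while $q$ is an interior point of $L_q$ (a leaf has no boundary), and a strictly subharmonic function has no interior local maximum. This contradiction proves (ii).

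For (i) and (iii) the idea is to upgrade the last argument: one would like to know that no nonempty compact $\mathcal F$-invariant set $K$ with $K\cap\text{Sing}\,\mathcal F=\varnothing$ can be disjoint from $Y$. Granting a strictly plurisubharmonic function on a neighbourhood of such a $K$ inside $\CP^2\setminus Y$, the same maximum-principle-along-a-leaf argument rules out $K$. Then (i) is immediate, since a second nontrivial minimal set would be such a $K$. For (iii), take a leaf $L$ with $\overline L\cap\text{Sing}\,\mathcal F=\varnothing$; then $\overline L$ is compact and invariant (the closure of a leaf is invariant), so it must meet $Y$. The leaf $L'$ through any point of $\overline L\cap Y$ lies in both $\overline L$ and $Y$, and by minimality $\overline{L'}=Y$, so $\overline L\supseteq Y$. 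Since $L$ is not compact, $\overline L\setminus L$ is a nonempty closed invariant subset of $\CP^2\setminus\text{Sing}\,\mathcal F$ containing $Y$; to upgrade this to $\overline L\setminus L=Y$ one shows the limit set $\overline L\setminus L$ is itself minimal, by a recurrence argument for the holonomy pseudogroup of $\mathcal F$ on a transversal near $Y$, and then applies (i).

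The hard part is the plurisubharmonicity input behind (i) and (iii): that $\CP^2\setminus Y$, or a neighbourhood of $Y$ within it, carries enough strictly plurisubharmonic functions --- some pseudoconvexity of the complement of a nontrivial minimal set. Near $Y$ the foliation $\mathcal F$ is nonsingular, hence given by local holomorphic first integrals, so one may pass to the holonomy pseudogroup acting on a one-dimensional transversal; manufacturing the required plurisubharmonic function from this picture together with the positivity of the Fubini--Study metric is the technical core of Camacho--Lins Neto--Sad, and is precisely the step one expects to fail over an ambient surface with less positive geometry.
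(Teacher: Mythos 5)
Your proof of (ii) is correct, and it takes a slightly different (equally valid) route from the paper: you make the complement of the curve $C$ Stein via $-\log|s|_h^2$ and kill the compact invariant set $Y$ inside it by the maximum principle along a leaf, whereas the paper makes $\CP^2\setminus Y$ Stein and kills the compact curve $C$ inside it because Stein manifolds carry no positive-dimensional compact subvarieties. Your version has the small advantage of not needing any pseudoconvexity theorem for (ii).

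For (i) and (iii), however, there is a genuine gap: you reduce both statements to the existence of a strictly plurisubharmonic function on (a neighbourhood of a compact invariant set in) $\CP^2\setminus Y$, and then explicitly leave that input unproved, calling it ``the technical core'' to be manufactured from the holonomy pseudogroup. That is exactly the step the paper supplies, and it is much more direct than you suggest. Since $Y$ is invariant and disjoint from $\text{Sing}\,\mathcal F$, near each point of $Y$ the foliation is nonsingular, so in a flow box $U\cong D\times D$ the set $Y\cap U$ is a union of plaques and $U\setminus Y\cong D\times\Omega$ with $\Omega\subset\C$ open; hence $\CP^2\setminus Y$ is locally Stein at every boundary point, and the theorem of Fujita and Takeuchi then gives that this locally Stein domain in $\CP^2$ is Stein. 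One now embeds $\CP^2\setminus Y$ properly into $\C^N$ and uses $|z|^2$: a sphere of minimal radius containing the image of a second nontrivial minimal set $Y'$ (disjoint from $Y$, as you correctly argue) touches it at a point $q$, and $|z|^2$ restricted to the leaf through $q$, which lies in $Y'$, would have an interior maximum --- contradiction, proving (i); the same touching argument applied to $\overline L$ yields $\overline L\cap Y\neq\varnothing$ for (iii). So as written your proposal establishes only (ii); the pseudoconvexity of $\CP^2\setminus Y$ that you defer is the missing key lemma, and no holonomy-pseudogroup construction is required for it. (Your final step in (iii), the inclusion $\overline L\setminus L\subset Y$, is also only sketched, though the paper itself is equally brief on that point.)
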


\begin{proof}
Since $Y$ is an invariant set of $\mathcal F$, every point $p \in Y$ belongs to $Y$ together with the leaf of $\mathcal F$ passing through $p$. This means that $\CP^2 \setminus Y$ is locally Stein at every boundary point, and so by the result of  Fujita~\cite{Fu} and
Takeuchi~\cite{Ta},  $\CP^2 \setminus Y$ is a Stein manifold.
From this, part (ii) immediately follows because Stein manifolds do not admit compact subvarieties of positive dimension. 
Suppose now that $Y'$ is another nontrivial minimal set, $Y \ne Y'$. Then $Y \cap Y' = \varnothing$, as otherwise the intersection would be an invariant set, contradicting minimality of $Y$ and $Y'$. Therefore, $Y'$ is a compact subset of $\CP^2 \setminus Y$. Consider a proper holomorphic embedding $\phi: \CP^2 \setminus Y \hookrightarrow \C^N$, 
which exists since $\CP^2 \setminus Y$ is Stein.
There exists a large ball $B(0,R)$ in $\C^N$ that contains $\phi(Y')$. By shrinking the radius of the ball we obtain $R_0\in\R$ such that the sphere
$bB(0,R_0)$ touches $\phi(Y')$ at some point $q$. Then the function $|z|^2$ on $\C^N$, when restricted to the leaf of the foliation passing through $q$, attains a maximum, but this contradicts the Maximum principle for plurisubharmonic functions. This contradiction shows that $Y$ is unique. A similar argument can be used to prove part~(iii).
\end{proof}

Camacho, Lins Neto, and Sad~\cite{CLNS1} also proved that in the space of singular holomorphic foliations on $\CP^2$ 
the subset of foliations without a nontrivial minimal set is open and nonempty. In fact, there are no known examples of nontrivial 
minimal sets. Theorem~\ref{t.min} holds for all $n>1$, however, Lins Neto~\cite{LN} 
proved that there are no nontrivial minimal sets for any holomorphic foliation in $\CP^n$ for $n>2$, see Section~\ref{s.LN1}.
We will also see in that section that the existence of a real analytic Levi-flat hypersurface 
$M \subset \CP^n$ implies the existence of a singular holomorphic foliation on $\CP^n$ for which $M$ is an invariant 
subset, and therefore $M$ contains a nontrivial minimal set. We discuss Levi-flats next.

\section{Levi-flat hypersurfaces}

A smooth (resp. real analytic) hypersurface $M$ in a complex manifold $X$, $\dim X = n >1$, is locally defined as the zero locus of 
a real-valued smooth (resp. real analytic) function $\rho$ with the nonvanishing gradient. An important biholomorphic 
invariant of $M$ is its Levi-form which locally can be defined as follows. Suppose $M \cap U = \{\rho = 0\}$, where $\rho$ is a 
smooth or real analytic function in an open neighbourhood $U$  of a point $p\in M$, and $\nabla \rho \ne 0$. 
Consider the Hermitian quadratic form defined by 
\begin{equation}\label{e.levi}
\mathcal L_\rho (p, w) = \sum_{j,k = 1}^n 
\frac{\del^2 \rho}{\del z_j \overline\del z_k}(p) \, w_j \overline w_k  , \ \ p \in M, \ w \in \mathbb C^n \cong T_p X.
\end{equation}
Recall that the complex tangent space $H_pM$ at a point $p\in M$ is defined as $H_p M = T_p M \cap J T_p M$, 
where $J$ is the standard almost complex structure on $X$; it is the $(n-1)$-dimensional complex linear subspace of $T_p M$. 
The restriction of $\mathcal L_\rho(p)$ to $H_p(M)$ (i.e., by restricting $w\in H_pM$) is called the {\it Levi-form} of $M$ at $p$. One can show that the signature of the Levi form is independent of the choice of the defining function $\rho$ and is invariant under biholomorphic mappings. 

A real hypersurface $M$ locally divides the ambient manifold into 2 connected open components: 
$U^+=\{\rho>0\}$ and $U^-=\{\rho < 0\}$. We say that $U^-$ is Levi-pseudoconvex at a point 
$p\in M$ if the Levi form of $M$  at $p$ is positive-semidefinite, i.e., $\mathcal L_\rho(p,w) \ge 0$ for all $w\in H_p M$. This is equivalent to saying that all the eigenvalues of the Levi form are nonnegative. 
For domains with smooth boundary Levi-pseudoconvexity of the boundary is equivalent to the domain 
being locally Stein.

\begin{defn}
A $C^2$-smooth real hypersurface $M\subset X$ is called Levi-flat if one of the following equivalent conditions hold:

(i) $M$ is foliated by complex hypersurfaces;

(ii) The Levi-form of $M$ vanishes identically;

(iii) The distribution of complex tangents $\{H_p M,\,p \in M\}$ is involutive (in the sense of Frobenius);

(iv) $M$ locally divides $X$ into two locally Stein domains. 
\end{defn}

The implication (ii) $\Leftrightarrow $ (iv) follows from the general theory of pseudoconvex domains; 
(i) $\Leftrightarrow$ (iii) is the Frobenius theorem; and 
(ii) $\Leftrightarrow$ (iii) can be proved by a direct computation. 
Using conditions (i) and (iv) it is possible to define Levi-flatness for hypersurfaces 
of class $C^1$ or even locally Lipschitz graphs. 

The foliation of $M$ by complex hypersurfaces is called the Levi foliation of $M$ and will be denoted by $\mathcal L$. Note that if the hypersurface $M$ is real analytic, then $\mathcal L$ is a real analytic foliation on $M$ of real codimension~1. Simple examples of Levi-flat hypersurfaces are $\{x_n=0\} \subset \C^n$ with coordinates $z=(z_1, \dots, z_n = x_n + i y_n)$; if $f \in \mathcal O(X)$
and $df\ne 0$, then $\{\Re f = \text{const}\}$ is a Levi-flat hypersurface. For compact examples, consider $M = \CP^1 \times \gamma$, where $\gamma$ is a simple smooth closed curve in $\CP^1$. Then $M$ is foliated by $\CP^1 \times \{p\}$, $p\in \gamma$, and therefore it is a closed Levi-flat hypersurface in $\CP^1 \times \CP^1$. 

The following proposition is due to H.~Cartan~\cite{Ca}.

\begin{prop}\label{p.1}
If $p \in M\subset \mathbb C^n$ is a real analytic Levi flat hypersurface, then there exists a neighbourhood 
$U$ of $p$ in $\mathbb C^n$ and a biholomorphic map $f: U \to U'$ such that 
$$
f(M\cap U) = \{x_n = 0\} \cap U' ,
$$
where $z=(z_1, \dots, z_n = x_n + i y_n)$ is a holomorphic coordinate system on $U'$.
\end{prop}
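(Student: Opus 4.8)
The plan is to produce, in a neighbourhood of $p$, a single holomorphic function $f$ with $df(p)\neq 0$ such that $M=\{\Re f=0\}$ locally; completing $f$ to a holomorphic coordinate system then yields the stated normal form, with $f$ playing the role of $z_n$. I would normalize $p=0$, write $M=\{\rho=0\}$ with $\rho$ real analytic near $0$ and $d\rho(0)\neq 0$, and after a unitary change of coordinates assume $\partial_{z_n}\rho(0)\neq 0$. Since $\rho$ is real analytic I would complexify, replacing $\bar z$ by an independent variable $\zeta$ to obtain $\rho(z,\zeta)$ holomorphic near $(0,0)\in\C^n\times\C^n$ with $\overline{\rho(z,\zeta)}=\rho(\bar\zeta,\bar z)$. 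For $a$ near $0$ I introduce the Segre variety $Q_a=\{z:\rho(z,\bar a)=0\}$; because the $z$-differential of $\rho(z,\bar a)$ at $z=a$ is the $(1,0)$-part of $d\rho$ at $a$, it is nonzero, so $Q_a$ is a smooth complex hypersurface near $a$, and $a\in Q_a$ precisely when $a\in M$.

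The key step is the claim that \emph{for every $a\in M$ the Levi leaf $L_a$ through $a$ coincides, as a germ at $a$, with $Q_a$.} By condition (i) of the definition of Levi-flat the leaf $L_a$ is a complex hypersurface, so near $a$ it admits a holomorphic parametrization $\gamma$ with $\gamma(0)=a$. Since $\gamma(t)\in L_a\subset M$ we have $\rho(\gamma(t),\overline{\gamma(t)})\equiv 0$; and since $\gamma$ is holomorphic, $\overline{\gamma(t)}$ is anti-holomorphic in $t$, so every pure holomorphic derivative $\partial_t^\alpha$ annihilates the second slot. Consequently $\partial_t^\alpha[\rho(\gamma(t),\overline{\gamma(t)})]$ and $\partial_t^\alpha[\rho(\gamma(t),\bar a)]$ are given by one and the same universal expression in the derivatives $\partial_z^\beta\rho$ and the derivatives of $\gamma$, and they agree at $t=0$. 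Hence all holomorphic Taylor coefficients at $0$ of the holomorphic map $t\mapsto\rho(\gamma(t),\bar a)$ vanish, so it vanishes identically, i.e.\ $L_a\subseteq Q_a$ near $a$; as both are $(n-1)$-dimensional complex submanifolds, $L_a=Q_a$ as germs at $a$.

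Next I would exploit the real-analytic structure of the Levi foliation $\mathcal L$. Choose a real-analytic arc $\delta\colon(-\e,\e)\to M$, $\delta(0)=p$, transverse to $\mathcal L$, so that the leaves of $\mathcal L$ near $p$ are exactly $L_{\delta(s)}=Q_{\delta(s)}$ for $s\in(-\e,\e)$. Let $\eta(s)$ be the holomorphic extension of $s\mapsto\overline{\delta(s)}$ to a complex disc and set $R(z,s)=\rho(z,\eta(s))$, holomorphic in $(z,s)$ near $(0,0)$; for real $s$ this is a defining function of $Q_{\delta(s)}$. A short computation gives $\partial_s R(0,0)=\overline{\langle\partial\rho(0),\delta'(0)\rangle}\neq 0$ (the pairing being nonzero exactly because $\delta$ is transverse to $\mathcal L$, i.e.\ $\delta'(0)\notin H_pM$), while $\partial_{z_n}R(0,0)=\partial_{z_n}\rho(0)\neq 0$. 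Solving $R=0$ for $z_n$ and reading off the Jacobian, the holomorphic map $(z',s)\mapsto(z',z_n(z',s))$ is a local biholomorphism near $(0',0)$; therefore the hypersurfaces $\{R(\cdot,s)=0\}$, $s\in\C$, form a holomorphic foliation of a neighbourhood $U$ of $p$ in $\C^n$ whose leaves are the fibres of a holomorphic submersion $g\colon U\to\C$, with $\{g=s\}=\{R(\cdot,s)=0\}$. Shrinking $U$, we get $M\cap U=\bigcup_{s\in(-\e,\e)}L_{\delta(s)}=\bigcup_{s\in\R}\{g=s\}=\{z\in U:\Im g(z)=0\}$. Taking $f:=ig$ and extending by holomorphic functions $f_1,\dots,f_{n-1}$ with $df_1(p),\dots,df_{n-1}(p),df(p)$ linearly independent, the map $F=(f_1,\dots,f_{n-1},f)$ is a biholomorphism onto a neighbourhood $U'$ of $0$ with $F(M\cap U)=\{\Re z_n=0\}\cap U'=\{x_n=0\}\cap U'$.

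I expect the key step of the second paragraph to be the main obstacle: it is the point at which the real-analytic, real-codimension-$1$ information carried by the Levi foliation is converted into genuinely holomorphic data, via the principle that a complex-analytic submanifold lying inside $M$ must lie inside the Segre variety of the complexified defining function. The passage of the third paragraph — complexifying a transversal and upgrading the real-analytic foliation of $M$ to a holomorphic foliation of an open subset of $\C^n$ — together with the concluding coordinate change, should then be routine.
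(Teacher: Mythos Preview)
Your proof is correct, but it takes a genuinely different route from the paper's. The paper's argument is much shorter: it picks a purely imaginary real-analytic first integral $f$ of the Levi foliation on $M$, observes that a function constant on the complex leaves is automatically CR, and then invokes Tomassini's theorem that a real-analytic CR function on a real-analytic hypersurface extends holomorphically to an ambient neighbourhood; the extension $F$ is the last coordinate of the flattening biholomorphism. Your approach instead complexifies the defining function and works with Segre varieties: you prove directly that the Levi leaf through $a\in M$ coincides with the Segre variety $Q_a$, then complexify a real-analytic transversal to obtain a holomorphic one-parameter family $\{R(\cdot,s)=0\}$ of hypersurfaces and hence a holomorphic submersion $g$ whose real level sets fill out $M$. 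What your approach buys is self-containment --- you never appeal to the CR extension theorem as a black box, and in fact your argument essentially unpacks that theorem in this special case --- and it simultaneously yields the extension of the Levi foliation to a holomorphic foliation of a neighbourhood (the paper's Proposition~\ref{p.folext}), since the fibres of $g$ give the extended leaves. The paper's approach is quicker and conceptually cleaner, isolating the analytic input (CR extension) from the elementary coordinate manipulation; it also connects the result to the general theory of CR functions. It is worth noting that the Segre-variety identity $L_a=Q_a$ you establish is exactly the mechanism the paper later uses (citing~\cite{SS}) to rule out real-algebraic Levi-flats, so your route is well aligned with tools appearing elsewhere in the paper. One small point to tighten: the equality $L_{\delta(s)}=Q_{\delta(s)}$ is proved only as germs at $\delta(s)$, so to pass to $M\cap U=\{\Im g=0\}\cap U$ you should invoke the identity principle for real-analytic hypersurfaces (both sides agree on an open subset of $M$ near the transversal, hence everywhere on the connected $M\cap U$); this is routine, as you anticipated.
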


\begin{proof}
We write $z=(z', z_n)$. Then, after a translation and rotation, and using the Implicit Function theorem, we can arrange $p=0$, and
$$
M = \left\{ y_n = r(z', \overline z', x_n), \ r(0) = 0, dr(0)= 0 \right\},
$$
where $r(z', \overline z', \Re z_n)$ is a real-valued real analytic function near the origin.
Let $f$ be a purely imaginary-valued real analytic first integral of the Levi foliation $\mathcal L$ on $M$, defined in a neighbourhood of $0$. 
We may further assume that $\frac{\partial f}{\partial x_n}(0)\ne 0$. 
Then $f$ is constant on the leaves of 
$\mathcal L$, and therefore it is a CR-function on $M$. It is well-known that any real analytic CR function $f$ extends as a holomorphic 
function $F$ to a neighbourhood of $M$, see~\cite{To}. Then $(z', z) \mapsto (z', F(z))$ has nondegenerate differential at $0$ and so 
it is the required change of coordinates.
\end{proof}

The following proposition is essentially a consequence of the previous result. It was first proved by Rea~\cite{Re} in a 
more general setting of Levi-flats of general codimension.

\begin{prop}\label{p.folext}
If $M$ is a nonsingular real analytic Levi-flat hypersurface in a complex manifold~$X$, then there exists a neighbourhood 
$V$ of $M$ in $X$ such that the Levi foliation $\mathcal L$ extends to a holomorphic foliation $\mathcal F$ on $V$.
\end{prop}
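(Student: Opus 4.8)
The plan is to build the holomorphic foliation $\mathcal F$ by patching together the local holomorphic foliations produced by Proposition~\ref{p.1}, checking that the transition data is compatible so that the local first integrals (or defining $1$-forms) glue to a global object on a neighbourhood of $M$. First I would cover $M$ by open sets $U_\alpha \subset X$ on each of which Proposition~\ref{p.1} applies: there is a biholomorphism $f_\alpha : U_\alpha \to U_\alpha'$ carrying $M \cap U_\alpha$ to $\{x_n = 0\}$. On $U_\alpha'$ the hypersurface $\{x_n=0\}$ is Levi-flat with Levi foliation given by the level sets of the holomorphic function $z_n$, which extends to the genuine holomorphic foliation of $U_\alpha'$ by the complex hyperplanes $\{z_n = c\}$. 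Pulling back by $f_\alpha$, we obtain on $U_\alpha$ a holomorphic function $h_\alpha := (z_n)\circ f_\alpha$ whose level sets define a nonsingular holomorphic foliation $\mathcal F_\alpha$ of $U_\alpha$, and whose restriction to $M \cap U_\alpha$ is a first integral of the Levi foliation $\mathcal L$ (here I use that $f_\alpha|_{M\cap U_\alpha}$ sends $\mathcal L$-leaves to the plaques $\{z_n = c,\ x_n = 0\}$).

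Next I would verify that these local foliations agree on overlaps. On $U_\alpha \cap U_\beta$ the functions $h_\alpha$ and $h_\beta$ both restrict to first integrals of $\mathcal L$ on $M \cap U_\alpha \cap U_\beta$; since $\mathcal L$ has real codimension one, on each connected piece of this intersection we have $h_\beta|_M = \phi_{\alpha\beta}\circ (h_\alpha|_M)$ for some real analytic diffeomorphism $\phi_{\alpha\beta}$ between one-dimensional (real) transversals — but because $h_\alpha, h_\beta$ take values in a complex line and are CR (indeed holomorphic on $U_\alpha$), the map $\phi_{\alpha\beta}$ is in fact the restriction of a biholomorphism between open subsets of $\C$. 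The key point is that two holomorphic functions on a connected open set in $\C^n$ that are related by a biholomorphism of $\C$ along the real hypersurface $M$ must be related by that same biholomorphism on a full neighbourhood: this follows from uniqueness of holomorphic extension (or equivalently, the identity principle applied after extending $\phi_{\alpha\beta}$ holomorphically). Hence $h_\beta = \Phi_{\alpha\beta}\circ h_\alpha$ on a neighbourhood of $M \cap U_\alpha \cap U_\beta$ for a biholomorphism $\Phi_{\alpha\beta}$ of domains in $\C$, which means $\mathcal F_\alpha$ and $\mathcal F_\beta$ have the same leaves there.

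Then I would assemble the global object: shrink the $U_\alpha$ if necessary to a neighbourhood $V$ of $M$ on which all overlaps behave as above, and define $\mathcal F$ on $V$ to be the foliation whose restriction to each $U_\alpha \cap V$ is $\mathcal F_\alpha$. In the language of the defining $1$-forms from Section~1, take $\omega_\alpha := dh_\alpha$; on overlaps $\omega_\beta = \Phi_{\alpha\beta}'(h_\alpha)\, \omega_\alpha$, so the $g_{\alpha\beta} := \Phi_{\alpha\beta}'\circ h_\alpha$ form a holomorphic nonvanishing cocycle, exactly the gluing data required in Section~1 to define a codimension-one holomorphic foliation (here automatically nonsingular, as each $\omega_\alpha$ is nonvanishing). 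By construction $\mathcal F|_M = \mathcal L$, since each $\mathcal F_\alpha$ restricts to $\mathcal L$ on $M \cap U_\alpha$.

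The main obstacle — and the step deserving the most care — is the overlap compatibility, specifically the claim that the transition maps $\phi_{\alpha\beta}$ defined only along $M$ propagate to genuine holomorphic transition maps on a neighbourhood. Concretely: given $h_\alpha,h_\beta$ holomorphic on a connected open $W \subset \C^n$ with $M \cap W$ a maximally real-dimensional totally-real-free hypersurface, and given a real analytic $\phi$ with $h_\beta = \phi\circ h_\alpha$ on $M\cap W$, one must produce a holomorphic $\Phi$ near $h_\alpha(M\cap W)$ with $h_\beta = \Phi\circ h_\alpha$ on $W$. One clean way: $h_\alpha$ is a submersion near a generic point of $M$, so locally $h_\alpha$ has a holomorphic right inverse $s$; then $\Phi := h_\beta \circ s$ is holomorphic and agrees with $\phi$ on the relevant real set, and $h_\beta - \Phi\circ h_\alpha$ is a holomorphic function vanishing on the hypersurface $M$, hence — since $M$ is not contained in any complex hypersurface (it is a real hypersurface) and by connectedness — vanishes identically on $W$. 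One must also track that finitely many consistency issues (cocycle condition $\Phi_{\alpha\gamma} = \Phi_{\beta\gamma}\circ\Phi_{\alpha\beta}$ near $M$) again propagate off $M$ by the same uniqueness argument, and that all this can be done on one common neighbourhood $V$; these are routine once the basic propagation lemma is in hand.
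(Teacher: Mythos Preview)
Your proposal is correct and takes essentially the same approach as the paper: use Proposition~\ref{p.1} to produce local holomorphic extensions of the Levi foliation via flat charts, and then verify that these extensions agree on overlaps. The only difference is cosmetic---the paper analyses the full transition biholomorphism $h = f_k \circ f_j^{-1}$ directly and shows its last component $h_n$ depends only on $z_n$ (since $\Re h_n$ is constant on each Levi leaf in $\{x_n=0\}$, hence $h_n$ is, and then one continues analytically off $M$), whereas you phrase the same computation in terms of the first integrals $h_\alpha$ and propagate the relation $h_\beta = \Phi_{\alpha\beta}\circ h_\alpha$ from $M$ to a neighbourhood via the identity principle.
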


\begin{proof}
By Prop.~\ref{p.1}, for every point $p\in M$ there exists a neighbourhood $U$ and a biholomorphic map $f: U \to U'$ that sends 
$M \cap U$ onto $M'=\{x_n=0\}\cap U'$. The holomorphic foliation on $U'$ given by complex hyperplanes 
$z_n = c$, $c \in \C$, extends the Levi foliation on $M'$, and therefore, the pullback of this foliation under $f$ gives an extension 
of the Levi foliation on $M \cap U$ to a holomorphic foliation on $U$. 

Now choose an open cover $\{U_j\}$ of $M$ by such open sets, so that on each $M \cap U_j$ the Levi foliation extends 
to a holomorphic foliation via  a local biholomorphism $f_j : U_j\cap M \to U'_j \cap \{x_n =0\}$. It remains to show that these
extensions agree on the overlaps. This can be seen as follows: if $U_j \cap U_k \ne \varnothing$, the map 
$$
h=f_k \circ f_j^{-1}: f_j (U_j \cap U_k) \to f_k (U_j \cap U_k)
$$
is a biholomorphism that sends complex hyperplanes in $U_j \cap \{x_n =0 \}$ to complex hyperplanes in 
$U_k \cap \{x_n=0\}$. Writing  $h = (h_1, \dots, h_n)$, the above conditions means that 
$\Re h_n(z',c)\equiv \text{const}$ for all fixed $c \in \R$, which implies that $h_n(z',c)\equiv \text{const}$. This implies that
$h_n$ is independent of $z'$, and therefore, 
$$
h(z', z) = (h'(z), h_n (z_n)).
$$
This is a holomorphic version of~\eqref{e.foli}, which proves the required statement.
\end{proof}

For some $C^\infty$-smooth but not real analytic Levi-flats, the Levi foliation may still extend holomorphically to a neighbourhood 
of the Levi-flat, for example, consider a hypersurface $y_n = f(x_n)$, for a smooth but not analytic $f$. 
But in general, the Levi foliation on a smooth but not analytic Levi-flat $M$ does not extend as a holomorphic foliation to any neighbourhood of $M$, for a specific example see~\cite{Re}.
More generally, one may consider singular real analytic Levi-flat hypersurfaces, these are real analytic sets of dimension $2n-1$ whose regular locus of dimension $2n-1$ is foliated by complex hypersurfaces. It turns out that near singular points, the Levi foliation may not extend to the ambient neighbourhood even as a singular holomorphic foliation, but under some general conditions there is an extension 
as a singular holomorphic web, see~\cite{Bru1, SS}.

Another immediate consequence of Prop.~\ref{p.1} is that a real analytic Levi-flat hypersurface $M$ is locally defined as a zero locus 
of a pluriharmonic function. Further, Barrett~\cite{Ba} showed that a global pluriharmonic defining function exists iff the 
holomorphic foliation in a neighbourhood of $M$ that extends the Levi foliation on $M$ is defined by a nonvanishing real 
analytic closed 1-form.

\bigskip

Next we discuss some topological properties of Levi-flats. Any real hypersurface locally divides the ambient manifold into two connected components. The global result is the following.

\begin{prop}\label{p.or}
Let $M$ be a smooth connected closed real hypersurface in $\CP^n$, $n \ge 1$. Then $M$ is orientable 
and divides $\CP^n$ into two connected components. 
\end{prop}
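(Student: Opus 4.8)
The plan is to use a homology/cohomology argument based on the fact that $\CP^n$ is simply connected and has no odd-degree cohomology.

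First I would establish orientability. A closed hypersurface $M$ in an oriented manifold $X$ is orientable precisely when its normal bundle is orientable (since $TX|_M \cong TM \oplus \nu_M$ and $TX|_M$ is oriented). The normal bundle of a codimension-one submanifold is a real line bundle, which is orientable iff its first Stiefel–Whitney class $w_1(\nu_M) \in H^1(M;\mathbb{Z}/2)$ vanishes. This class is the mod-$2$ reduction of the Poincaré dual of $[M]$ in $X$; more concretely, $w_1(\nu_M)$ is the restriction to $M$ of the class in $H^1(X\setminus M;\mathbb{Z}/2)$ (or one can argue via the Gysin/Thom sequence) that measures whether a normal arc changes sides. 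Since $H^1(\CP^n;\mathbb{Z}/2)=0$ and, more to the point, $H^{2n-1}(\CP^n;\mathbb{Z}/2)=0$ (the cohomology of $\CP^n$ is concentrated in even degrees), the Poincaré dual of $[M]$ in $H^1(\CP^n;\mathbb{Z}/2)$ vanishes, forcing $[M]=0$ in $H_{2n-1}(\CP^n;\mathbb{Z}/2)$ and hence $w_1(\nu_M)=0$. Thus $\nu_M$ is trivial (a line bundle is trivial iff orientable), $M$ is orientable, and moreover $M$ separates locally in a coherent global way.

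Next I would prove that $M$ separates $\CP^n$ into exactly two components. Consider the long exact sequence of the pair $(\CP^n, \CP^n\setminus M)$ with $\mathbb{Z}/2$ coefficients, or equivalently use Alexander duality: $\tilde H_0(\CP^n\setminus M;\mathbb{Z}/2)\cong \tilde H^{2n-1}(\CP^n;\mathbb{Z}/2)\oplus(\text{a term involving }H^*(M))$. The cleanest route: by the Mayer–Vietoris or the Thom isomorphism applied to a tubular neighborhood $V\cong M\times(-1,1)$ of $M$, with $U=\CP^n\setminus M$, one gets an exact sequence relating $H_*(\CP^n)$, $H_*(M)$, and $H_*(\CP^n\setminus M)$. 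Since $M$ is connected and $w_1(\nu_M)=0$ so that $V\setminus M$ has two components (this is exactly where orientability of the normal bundle enters), a diagram chase using $H_1(\CP^n)=0$ shows $\CP^n\setminus M$ has exactly two connected components. The vanishing of $[M]$ in homology is the crucial input ensuring the boundary map behaves correctly; without it one could in principle have a nonseparating hypersurface, as happens for $\mathbb{RP}^{2n-1}\subset\mathbb{RP}^{2n}$.

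The main obstacle is bookkeeping: one must be careful to distinguish \emph{local} two-sidedness of $M$ (equivalent to orientability of $\nu_M$, which fails for nonorientable hypersurfaces like $\mathbb{RP}^2\subset\mathbb{RP}^3$... wait, that one is actually two-sided — the subtle point is that two-sidedness and the hypersurface being orientable are a priori different, coinciding only because the ambient is orientable) from \emph{global} separation. The logical flow I would follow is: (1) $H^{2n-1}(\CP^n;\mathbb{Z}/2)=0$ and Poincaré duality $\Rightarrow$ the $\mathbb{Z}/2$-fundamental class $[M]$ pushes to $0$ in $H_{2n-1}(\CP^n;\mathbb{Z}/2)$; (2) this means $M$ is null-homologous mod $2$, which by the exact sequence of $(\CP^n,\CP^n\setminus M)$ forces $\CP^n\setminus M$ to be disconnected, with the number of components controlled by the rank of the relevant map; (3) connectedness of $M$ together with $H_1(\CP^n;\mathbb{Z}/2)=0$ pins the number of components to exactly two; (4) two-sidedness of $M$ then gives $w_1(\nu_M)=0$, and since the ambient $\CP^n$ is orientable, $M$ itself is orientable. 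An alternative, more hands-on argument avoiding Alexander duality: take a generic smooth map $\CP^n\to\mathbb{R}$ (or use that $M$, being compact, admits a tubular neighborhood) and count, along any loop in $\CP^n$, the mod-$2$ intersection number with $M$; since $\pi_1(\CP^n)=0$ this number is zero for every loop, which both rules out a nonseparating $M$ and shows a transverse arc crossing $M$ switches between a fixed pair of ``sides.'' I would present the cohomological version as the main proof and mention the intersection-number heuristic as intuition.
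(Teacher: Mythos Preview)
Your cohomological approach is correct in substance, though the exposition is a bit tangled (e.g., ``$w_1(\nu_M)$ is the restriction to $M$ of the class in $H^1(X\setminus M;\mathbb{Z}/2)$'' does not typecheck, since $M\not\subset X\setminus M$; what you want is $w_1(\nu_M)=i^*\mathrm{PD}[M]$ with $\mathrm{PD}[M]\in H^1(\CP^n;\mathbb{Z}/2)=0$). Once that is cleaned up, the chain of implications---$H^1(\CP^n;\mathbb{Z}/2)=0\Rightarrow \nu_M$ trivial $\Rightarrow M$ two-sided and orientable, and then the exact sequence of the pair $(\CP^n,\CP^n\setminus M)$ together with the Thom isomorphism and $H_1(\CP^n;\mathbb{Z}/2)=0$ forcing exactly two components---goes through.

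The paper, however, takes precisely the ``alternative, more hands-on'' route you relegate to a parenthetical at the end. Following Samelson, it argues by contradiction: if $M$ were nonorientable, transport of a unit normal along a suitable loop on $M$ produces a closed curve $\gamma\subset\CP^n$ meeting $M$ transversally in a single point; since $\CP^n$ is simply connected, $\gamma$ bounds a smooth disk, which after a transversality perturbation meets $M$ in a $1$-manifold with an odd number of boundary points on $\partial D^2$---impossible. The same single-intersection-point trick then shows the two local sides of $M$ lie in distinct components of $\CP^n\setminus M$. Your argument packages the same obstruction (mod-$2$ intersection with loops, vanishing because $\pi_1(\CP^n)=0$) into characteristic classes and exact sequences; it is more systematic and generalizes cleanly to other simply connected ambient manifolds, while the paper's version is entirely elementary and self-contained, requiring only the Transversality theorem. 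Given the survey's stated audience of nonexperts, the paper's choice keeps the prerequisites minimal; your version would be the natural one in a more algebraic-topological treatment.
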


The topological version of this on $\R^2$ is known as the Jordan curve theorem, and for hypersurfaces in $\R^n$ as the Jordan-Brouwer Separation theorem. 

\begin{proof}
The proof below is based on Samelson~\cite{Sa}, see also Mishchenko~\cite{Mi1}.
We first show orientability. Arguing by contradiction, suppose that $M$ is not orientable. Then there exists a closed path $\tau$ 
on $M$ starting and terminating at a point $p \in M$ such that a normal unit vector to $M$ at $p$ when transported along the curve $\tau$ returns to $p$ pointing in the opposite direction. By taking a point $p'$ on the initial normal to $M$ at $p$, sufficiently close to $p$, 
and moving $p'$ on the normal along the curve $\tau$ upon the return to $p$ we obtain a point $p''$ on the normal to $M$ but on the other side of $M$. This gives us a curve starting at $p'$ and terminating at $p''$ that does not intersect $M$. We now connect $p'$ and $p''$ with a segment of a straight line in the normal to $M$ at $p$. After smoothing near $p'$ and $p''$, we obtain a smooth 
closed curve  $\gamma$ in $\CP^n$ which transversely intersects $M$ at exactly one point $p$.  Since $\CP^n$ is simply-connected, there exists a smooth map $h : D^2 \to \CP^n$ such that $h(bD^2) = \gamma$. Note that in a neighbourhood of $\gamma$ the map $h$ is transverse
to $M$, and therefore, by the Transversality theorem (see, e.g.,~\cite[Ch.3]{Hi}) there exists a small $C^2$-perturbation of $h$, preserving $h$ near the boundary of $D^2$, such that $h$ is transverse to $M$. Then by transversality, $h^{-1}(M)$ is a union of closed 
smooth curves in $D^2$; these are either compact curves in $D^2$ or have 2 terminal points on $bD^2$. But note that the curve in 
$h^{-1}(M)$ with the end point $h^{-1}(p)$ does not have the other end on $bD^2$ because by construction $\gamma$ intersects $M$ only at $p$. This contradiction proves that $M$ is orientable.

Next we show that $M$ divides $\CP^n$ into 2 connected components. Since $M$ is orientable and connected, there is a neighbourhood $U$ of $M$ such that $M$ divides $U$ into exactly 2 connected components, say, $U'$ and $U''$.
 Suppose that $p'\in U'$ and $p''\in U''$ are two points on the opposite side of the normal line to $M$ at some point $p$. 
 The same argument as above shows that $p'$ and $p''$ cannot be connected by a smooth curve in $\CP^n \setminus M$. 
 Thus, $\CP^n\setminus M$ consists of at least two path connected components: $X'$ containing $U'$ and $X''$ containing 
 $U''$. Since $\CP^n$ is connected, it is easy to see that $\CP^n\setminus M = X' \cup X''$.
\end{proof}

Let $M$ be a closed smooth Levi-flat hypersurface in $\CP^n$, $n\ge 2$. Then by Prop.~\ref{p.or}, $M$ divides $\CP^n$ into two connected complex manifolds $X_1$ and $X_2$, both are locally Stein near every boundary point.   In general, it is not true that a relatively compact domain of a complex manifold that is locally Stein near every boundary point is a Stein manifold. But for domains in $\CP^n$, 
Fujita~\cite{Fu} and Takeuchi~\cite{Ta} 
 proved that a domain with nonempty locally Stein boundary is Stein. To summarize, we obtained the following.

\begin{prop}\label{p.st}
A smooth closed Levi-flat manifold divides the ambient $\CP^n$ into two Stein manifolds.
\end{prop}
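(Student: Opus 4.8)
The plan is to combine three facts already in place: the Jordan--Brouwer type separation result of Prop.~\ref{p.or}, the local description of Levi-flatness, and the Fujita--Takeuchi solution of the Levi problem in $\CP^n$. So, first, taking $M$ connected (otherwise one argues componentwise, or simply notes that a connected Levi-flat already separates), Prop.~\ref{p.or} gives that $\CP^n \setminus M$ is a disjoint union of two nonempty connected domains $X_1$ and $X_2$ with $bX_1 = bX_2 = M \neq \varnothing$.

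Next I would check that each $X_i$ is locally Stein at every point of $M$. This is essentially condition (iv) in the definition of Levi-flat, but it also follows directly from condition (ii): near a point $p \in M$ pick a defining function $\rho$ with nonvanishing gradient, so that locally one component is $\{\rho < 0\}$ and the other $\{\rho > 0\}$; since the Levi form $\mathcal{L}_\rho(p,\cdot)$ restricted to $H_pM$ vanishes identically, it is simultaneously positive- and negative-semidefinite, so \emph{both} $\{\rho < 0\}$ and $\{\rho > 0\}$ are Levi-pseudoconvex at $p$. By the equivalence of Levi-pseudoconvexity of a smooth boundary with local Steinness (the local Levi problem, recalled just before the statement), each $X_i$ then has a neighbourhood $U_p$ of $p$ with $U_p \cap X_i$ Stein. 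Thus $X_1$ and $X_2$ are domains in $\CP^n$ with nonempty locally Stein boundary.

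Finally, apply the theorem of Fujita~\cite{Fu} and Takeuchi~\cite{Ta}: a domain in $\CP^n$ with nonempty locally Stein boundary is Stein. This shows that $X_1$ and $X_2$ are Stein, as claimed. The one genuinely substantial step is this last one --- the global Levi problem in $\CP^n$ --- which I invoke as a black box, and it is precisely where projectivity is essential: the analogous statement fails on a general compact complex manifold. Everything else is routine bookkeeping: fitting together the local normal forms along $M$, observing that both components are nonempty, and noting that it is the vanishing (not merely semidefiniteness) of the Levi form that makes the two sides simultaneously pseudoconvex.
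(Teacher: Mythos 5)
Your proof is correct and follows essentially the same route as the paper: Prop.~\ref{p.or} to separate $\CP^n\setminus M$ into two domains, local Steinness of both sides from the vanishing of the Levi form (condition (iv) of Levi-flatness), and the Fujita--Takeuchi theorem to conclude each component is Stein. The only addition is your explicit verification of local pseudoconvexity on both sides, which the paper treats as part of the definition.
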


The proof of nonexistence of Levi-flats in $\CP^n$, $n>2$, essentially boils down to showing that this is impossible. 
Without the assumption of smoothness, it is easy to divide the projective space into two Stein manifolds, for example, 
one can take the closure in $\CP^2$ of $\{x_2=0\}\subset \C^2$. Further, there are examples of compact complex surfaces 
which can be divided into Stein parts by smooth Levi-flats. Indeed, Ohsawa~\cite{Oh82} constructed an example of a Levi-flat hypersurface in the product of 
$\CP^1$ and an elliptic curve such that the complement of the Levi-flat is the union of two Stein manifolds. Further, Nemirovski~\cite{Ne} constructed a family of examples of compact algebraic and nonalgebraic (Hopf surfaces) complex surfaces 
that admit closed smooth Levi-flat hypersurfaces. In these examples a Levi-flat also divides the complex surface into two Stein manifolds.

On the other hand, nonexistence of smooth Levi-flats in Stein manifolds is elementary.

\begin{prop}\label{p.st2}
There does not exist a closed smooth Levi-flat hypersurface in a Stein manifold of dimension $\ge 2$.
\end{prop}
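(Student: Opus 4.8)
The plan is to use property (ii) of Stein manifolds: a Stein manifold $X$ of dimension $\ge 2$ admits a strictly plurisubharmonic exhaustion function $\rho$. Suppose for contradiction that $M \subset X$ is a closed smooth Levi-flat hypersurface, with Levi foliation $\mathcal L$ whose leaves are complex hypersurfaces contained in $M$. Since $M$ is compact and $\rho$ is continuous, the restriction $\rho|_M$ attains a maximum at some point $q \in M$. Let $L$ be the leaf of $\mathcal L$ through $q$; then $L \subset M$, so $\rho|_L$ also attains its maximum at $q$.

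The key step is then to derive a contradiction from the maximum principle. Since $L$ is a complex manifold of dimension $n-1 \ge 1$ and $\rho$ is strictly plurisubharmonic on $X$, the restriction $\rho|_L$ is a strictly plurisubharmonic (in particular, strictly subharmonic on any local holomorphic disc) function on $L$. A nonconstant strictly subharmonic function cannot attain an interior local maximum, and $\rho|_L$ certainly cannot be constant on any open subset of $L$ because strict plurisubharmonicity forces $i\partial\bar\partial(\rho|_L)$ to be positive-definite, not identically zero. Hence $\rho|_L$ has no local maximum at all on $L$, contradicting the fact that it attains its global maximum at $q \in L$. This contradiction shows no such $M$ exists.

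I expect the only point requiring a little care is the assertion that the restriction of a strictly plurisubharmonic function to a complex submanifold is again strictly plurisubharmonic (equivalently, that the complex Hessian of $\rho$ restricted to the tangent space of $L$ stays positive-definite). This follows because $T_xL \subset T_xX$ is a complex subspace, and the complex Hessian form in~\eqref{e.levi} is positive-definite on all of $T_xX \setminus\{0\}$, hence on the nonzero vectors of the subspace $T_xL$; in particular $\rho|_L$ is nonconstant near $q$ and strictly subharmonic, which is all the maximum principle needs. Everything else is a direct application of the maximum principle for (pluri)subharmonic functions, exactly as in the proof of Theorem~\ref{t.min}.
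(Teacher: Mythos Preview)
Your proof is correct and follows essentially the same argument as the paper: produce a strictly plurisubharmonic function on $X$, take its maximum on the compact $M$, restrict to the leaf through that point, and invoke the maximum principle. The only cosmetic difference is that the paper uses characterization~(iii) of Stein manifolds (a proper embedding $X\hookrightarrow\C^N$ together with $|z|^2$) whereas you use characterization~(ii) directly (a strictly plurisubharmonic exhaustion); this saves the embedding step but is otherwise the identical idea.
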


\begin{proof}
Any Stein manifold $X$ can be properly embedded into $\C^N$ for some $N>0$. Under such an embedding, the image of a closed Levi-flat $M$ is a compact subset of $\C^N$, in particular, is contained in some large ball $B(0,R)$, $R>0$. We may decrease $R$ until for some 
$R_0>0$ the sphere $bB(0,R_0)$ touches $M$ at some point $p$. At this point the restriction of the function $|z|^2$ 
to the leaf through $p$ of the Levi foliation on $M$  attains a maximum, which contradicts the Maximum principle for plurisubharmonic functions.
\end{proof}

As a consequence we also obtain that a Levi-flat hypersurface in $\CP^n$ cannot be globally defined by a pluriharmonic function. Indeed, if $M = \rho^{-1}(0)$ for some pluriharmonic function $\rho$ in a neighbourhood of $M$, then for small $c \ne 0$, the hypersurface 
$M_c = \{\rho^{-1}(c)\}$ is also Levi-flat. But then, by Prop.~\ref{p.st}, $M_c$ is contained in a Stein manifold, which is impossible by Prop.~\ref{p.st2}.

\bigskip

In the remaining part of this section we collect some general facts about Levi-flat hypersurfaces in $\CP^n$. By nonexistence, these results are (potentially) nontrivial only for $n=2$, and describe the empty set for $n>2$. Nevertheless, we formulate the results for a general $n$.

Some topological properties of Levi-flat hypersurfaces can be deduced from the result of Hill-Nacinovich~\cite{HN1}. Their result holds for a general class of weakly $q$-concave projective CR manifolds of general CR dimension and codimension, which, in particular, includes Levi-flat hypersurfaces, but for simplicity we formulate it for Levi-flats only. This result can be considered as an analogue of the classical Lefschetz Hyperplane theorem.

\begin{theorem}[\cite{HN1}]
Let $M \subset \CP^n$ be a closed smooth Levi-flat hypersurface, $n>1$. Let $P_0 \cong \CP^{n-1}$ be a complex hyperplane,
and $M_0 = M \cap P_0$ be the hyperplane section. Then the natural homomorphism
$$
H^j (M, \mathbb Z) \mapsto H^j (M_0, \mathbb Z)
$$
is an isomorphism for $j < n-2$ and injective for $j=n-2$. Further, the natural homomorphism
$$
H_j (M_0, \mathbb Z) \mapsto H_j (M, \mathbb Z)
$$
is an isomorphism for $j < n-2$ and is surjective for $j=n-2$. Finally,
$$
\pi_j (M, M_0) = 0, \ \ j < n-1,
$$
and
$$
\pi_j (M_0) \mapsto \pi_j (M)
$$
is an isomorphism for $j<n-2$ and injective for $j=n-2$.
\end{theorem}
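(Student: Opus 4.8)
The plan is to deduce the Hill–Nacinovich theorem from a Morse-theoretic argument on the hyperplane complement, in the spirit of the classical proof of the Lefschetz hyperplane theorem via Morse theory on the affine variety $\CP^n\setminus P_0\cong\C^n$. First I would observe that $M$ is weakly $1$-concave as a Levi-flat hypersurface: each point lies on a germ of a complex hypersurface contained in $M$, so the relevant $q$-concavity hypothesis of Hill–Nacinovich is satisfied with the optimal value. The key geometric input is that the function $\phi=\log(1+|z|^2)$, restricted to $M\setminus M_0$ (where $z$ are affine coordinates on $\C^n=\CP^n\setminus P_0$), behaves like a Morse function whose critical points all have index $\le n-1$ on $M$, because along any leaf of the Levi foliation the function $\phi$ is plurisubharmonic, hence has no local maxima on positive-dimensional complex submanifolds, which forces the negative-definite part of the Hessian transverse to the leaves to have dimension at most $\dim_\R M-\dim_\R(\text{leaf})+(\text{complex Morse index bound})$. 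Working this out carefully gives the index estimate $\le n-1$ needed to kill the relative homotopy groups $\pi_j(M,M_0)$ for $j<n-1$.

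The steps in order: (1) Fix a generic hyperplane $P_0$ so that $M_0=M\cap P_0$ is a smooth transverse intersection; genericity here is via a Bertini-type / Sard argument on the dual projective space, using that $M$ is compact and $C^\infty$. (2) Set up the exhaustion $\phi$ on $M\setminus M_0$ and perturb it slightly to a Morse function $\psi$ (still with no local maxima along leaves of $\mathcal L$), using a small generic perturbation so that the leafwise-plurisubharmonicity is preserved on a neighborhood of each critical point. (3) Prove the index bound: at a critical point $p$, the Levi foliation gives a complex $(n-1)$-dimensional subspace $H_pM$ on which $\psi$ restricts to a leafwise-psh function, so the Hessian is positive-semidefinite on a real hyperplane's worth of directions inside $H_pM$; combined with the one remaining real direction transverse to $H_pM$ in $T_pM$, at most $\dim_\R M-(2n-2)=1$ negative directions can come from inside $H_pM$'s complement — tallying gives $\mathrm{index}(p)\le n-1$. (Here one uses the standard fact that a psh Morse function on an $m$-complex-dimensional manifold has Morse index $\le m$.) (4) Apply Morse theory: $M$ deformation retracts onto $M_0$ with cells of dimension $\ge$ (something forcing $\pi_j(M,M_0)=0$ for $j<n-1$); more precisely, $M$ is obtained from a neighborhood of $M_0$ by attaching cells of dimension $\ge n-1$. (5) From $\pi_j(M,M_0)=0$ for $j<n-1$ deduce the homology isomorphism/surjectivity statements by the relative Hurewicz theorem and the long exact sequence of the pair, and the cohomology statements by universal coefficients (or by the dual Morse function $-\psi$ near $M_0$). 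The injectivity at $j=n-2$ on cohomology and surjectivity at $j=n-2$ on homology come from the fact that the attaching cells are exactly of dimension $n-1$, so they can create but not destroy classes in degree $n-2$.

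The main obstacle I expect is step (3), the sharp Morse index estimate, together with making the perturbation in step (2) compatible with it. The subtlety is that $M$ is only $C^\infty$ (not real analytic), so the Levi foliation need not extend to a neighborhood in $\CP^n$ (Proposition~\ref{p.folext} is an analytic statement), and one must argue intrinsically on $M$: the leafwise Hessian comparison has to be done within $T_pM$ using only the involutive distribution $H\,M$ and the ambient K\"ahler metric, and one must check that a generic small perturbation of $\phi$ can be chosen to remain strictly leafwise-psh near each (finite set of) critical points while becoming Morse. A clean way to handle this is to note that $\phi$ is already strictly psh on all of $\C^n$ as a function on the ambient space, hence strictly psh when restricted to each leaf of $\mathcal L$ (leaves are complex submanifolds), so leaves carry no critical points of $\phi|_M$ at all that are leafwise-maxima — in fact $\phi$ restricted to any leaf is a strictly psh exhaustion, so $\phi|_M$ can only be critical in the normal direction, and a generic perturbation within the finite-dimensional space of functions $\log(1+|z-a|^2)$ (varying the center $a$) yields a Morse function with the desired index bound by the Andreotti–Frankel argument applied leafwise. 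Granting this, steps (4) and (5) are formal homotopy-theoretic bookkeeping.
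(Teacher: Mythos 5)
The survey does not prove this theorem; it is quoted from Hill--Nacinovich \cite{HN1}, and your Andreotti--Frankel/Bott-style Morse theory on $M\setminus M_0$ is indeed the kind of argument behind it, so the overall skeleton (generic $P_0$, exhaustion $\phi=\log(1+|z|^2)$ of $M\setminus M_0$, index estimate, cell attachment, then Hurewicz and the exact sequences) is the right one. However, your central step (3) is wrong as stated, in two ways. Leafwise strict plurisubharmonicity at a critical point does \emph{not} make the Hessian positive semidefinite on a real hyperplane's worth of directions inside $H_pM$; it only forbids the negative subspace from containing a complex line (e.g.\ $|z|^2-\tfrac32(\Re z)^2$ is strictly psh with a negative real direction). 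So inside $H_pM$ the negative subspace is totally real, of dimension at most $n-1$ --- which is exactly the ``standard fact'' you quote, with $m=n-1$. Adding the single real direction of $T_pM$ transverse to $H_pM$, the correct bound is $\mathrm{index}(p)\le (n-1)+1=n$, not $n-1$: your tally contradicts your own cited fact. The bound $n-1$ would yield $\pi_j(M,M_0)=0$ for $j<n$, strictly stronger than the theorem, and it is false in general (already for leaves $\{z_n=c(t)\}$ with $|c(t)|$ having a nondegenerate local maximum the transverse direction contributes a negative eigenvalue). Fortunately the correct bound $\le n$ is precisely what the stated ranges require: descending $\phi$ attaches cells of dimension $\ge(2n-1)-n=n-1$, hence $\pi_j(M,M_0)=0$ for $j<n-1$, and the (co)homology statements follow. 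When repairing the estimate, also justify that the Hessian of $\phi|_M$ restricted to $H_pM$ coincides with the Hessian of $\phi|_{L_p}$: this holds because $d(\phi|_M)(p)=0$, so the second-fundamental-form correction term drops out.

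Two further points need attention. First, since $M$ is only smooth, $\phi_a=\log(1+|z-a|^2)$ restricted to $M\setminus M_0$ may have infinitely many critical points; being Morse they are isolated, so they can only accumulate at $M_0$ (critical values accumulate only at $+\infty$), and you need a short exhaustion argument --- the sets $\{\phi_a>c\}\cup M_0$ form a neighbourhood basis of $M_0$ in $M$, and each compact sublevel set $\{\phi_a\le c\}$ contains finitely many critical points --- to pass from finite-stage cell attachment to the statement about the pair $(M,M_0)$; this is routine but not automatic ``bookkeeping.'' Your perturbation scheme itself is fine: $\phi_a$ has the same critical points and indices on $M$ as $|z-a|^2$, which is Morse on $M\setminus M_0$ for almost every $a$ by the classical distance-function argument, and every $\phi_a$ is strictly psh on all of $\C^n$, so there is no issue of ``preserving leafwise plurisubharmonicity''; likewise your worry about non-extendability of the Levi foliation is a red herring, since only the intrinsic foliation of $M$ and restrictions of ambient psh functions to its leaves are used. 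Second, be careful at the edge degree: from $\pi_j(M,M_0)=0$ for $j<n-1$ the exact sequences give \emph{surjectivity} of $\pi_{n-2}(M_0)\to\pi_{n-2}(M)$ and of $H_{n-2}(M_0)\to H_{n-2}(M)$, and \emph{injectivity} of $H^{n-2}(M)\to H^{n-2}(M_0)$; state explicitly which of these your argument yields, since the homotopy edge case as phrased in the survey (injectivity at $j=n-2$) does not follow from the vanishing of $\pi_j(M,M_0)$, $j<n-1$, alone.
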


\bigskip

Similar results were obtained in \cite{LN}, and~\cite{NW}. Another application of Prop.~\ref{p.or} is the following result.

\begin{prop}\label{p.compactleaf}
A smooth Levi-flat hypersurface $M \subset \CP^n$, $n \ge 2$, cannot contain a compact leaf.
\end{prop}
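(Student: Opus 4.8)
The plan is to argue by contradiction: suppose $L \subset M$ is a compact leaf of the Levi foliation $\mathcal L$. Since $L$ is a compact complex hypersurface of dimension $n-1$ in $\CP^n$, it is in fact an algebraic hypersurface (by Chow's theorem), hence $L$ has strictly positive homology class; equivalently, $L$ meets every curve and, more to the point, $\CP^n \setminus L$ is not Stein — it contains compact complex subvarieties. So the strategy is to derive a contradiction from the separation properties of $M$ together with the Steinness of the complement.

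Concretely, I would proceed as follows. First, by Proposition~\ref{p.or}, $M$ divides $\CP^n$ into two connected components $X_1$ and $X_2$, and by Proposition~\ref{p.st} both $X_1$ and $X_2$ are Stein. Second, the compact leaf $L$ lies inside $M$, which is the common boundary $bX_1 = bX_2$; I want to show $L$ can be pushed slightly into one of the Stein sides, contradicting Proposition~\ref{p.st2} (or directly contradicting Steinness). By Proposition~\ref{p.folext}, the Levi foliation $\mathcal L$ extends to a holomorphic foliation $\mathcal F$ in a neighbourhood $V$ of $M$ in $\CP^n$, and $L$ is a compact leaf of $\mathcal F$. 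A compact leaf of a codimension-one holomorphic foliation has a well-defined holonomy along loops in $L$; the key point is that the normal bundle $N_{\mathcal F}|_L$ together with this holonomy controls whether nearby leaves are compact or spiral off. Since $L \subset \CP^n$ is an ample (positive) divisor, $N_{\mathcal F}|_L = N_{L/\CP^n}$ is a positive line bundle on $L$, so it has no nonzero holomorphic sections vanishing nowhere — in fact its only obstruction-free deformations move $L$ to nearby algebraic hypersurfaces $L_t$ which are again compact and again positive divisors. These nearby leaves $L_t$ of $\mathcal F$, for small $t \ne 0$, lie in $V \setminus M$, hence entirely in $X_1$ or entirely in $X_2$; but a Stein manifold contains no compact complex hypersurface, a contradiction.

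The main obstacle is making the "nearby leaves are compact" step rigorous: a priori the extended foliation $\mathcal F$ near the compact leaf $L$ need not be a fibration, and the leaves through points near $L$ could be noncompact even though $L$ itself is compact and $N_{\mathcal F}|_L$ is positive. The clean way around this is to invoke the stability/linearization theory for compact leaves: because $H^1(L, \OO) $ and the holonomy representation interact via $N_{\mathcal F}|_L$, and because $L$ is a hyperplane-type (positive) divisor so that $H^0(L, N_{\mathcal F}|_L \otimes (\text{something negative})) = 0$, the holonomy of $\mathcal F$ along $L$ must be trivial (or finite), forcing $\mathcal F$ to be, near $L$, the fibration by level sets of a local submersion; then the nearby fibres $L_t$ are genuinely compact complex hypersurfaces lying on one side of $M$. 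Alternatively — and this is probably the cleanest route to write — one avoids perturbation entirely: $L$ itself, being a compact complex hypersurface contained in $\overline{X_1}$ with $L = bX_1 \cap L \subset M$, already meets the boundary of the Stein manifold $X_1$; applying the maximum principle for plurisubharmonic functions, as in the proof of Proposition~\ref{p.st2}, to the restriction to $L$ of a strictly plurisubharmonic exhaustion of $X_1$ (or of $|z|^2$ under a proper embedding $X_1 \hookrightarrow \C^N$) shows that a function plurisubharmonic near $\overline{X_1}$ cannot attain an interior maximum on the positive-dimensional compact complex variety $L$ unless it is constant there, which is incompatible with $L$ touching the strictly plurisubharmonic boundary. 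I expect the referee-proof version to combine the extension Proposition~\ref{p.folext} with this maximum-principle argument, sidestepping the delicate holonomy analysis.
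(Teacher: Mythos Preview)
Your second (``cleanest'') route does not work as written. The compact leaf $L$ sits entirely in $M = bX_1$, not in $X_1$; a strictly plurisubharmonic exhaustion of $X_1$ is defined only on the open manifold $X_1$ and blows up at the boundary, so it has no meaningful restriction to $L$, and under a proper embedding $X_1 \hookrightarrow \C^N$ the boundary $M$ (hence $L$) is sent to infinity. Nor can you replace it by a global function plurisubharmonic on a neighbourhood of $\overline{X_1}$ in $\CP^n$: there are none that are strictly plurisubharmonic anywhere. So the maximum-principle argument of Proposition~\ref{p.st2} simply does not apply to a variety contained in the boundary.

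Your first route also has gaps. Proposition~\ref{p.folext} assumes $M$ real analytic, whereas the statement is for smooth $M$, so you cannot invoke a holomorphic extension $\mathcal F$ in general. Even in the real analytic case, the assertion that nearby leaves of $\mathcal F$ are compact is exactly the delicate point you flag, and the sketch you give (positivity of $N_{L/\CP^n}$ forcing trivial or finite holonomy) is not correct as stated: positivity gives many sections of the normal bundle, hence many algebraic deformations of $L$, but these are \emph{not} leaves of $\mathcal F$ and they are certainly not disjoint from $L$ (two positive divisors in $\CP^n$ always meet).

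The paper's proof bypasses all of this with a purely topological trick you almost had but did not use: one does not need the pushed-off copy of the leaf to be complex. Since $M$ is orientable (Proposition~\ref{p.or}), one can translate $S$ slightly along the unit normal to $M$ to obtain a smooth real $(2n-2)$-manifold $\tilde S$ with $S\cap\tilde S=\varnothing$ and $[\tilde S]=[S]$ in $H_{2n-2}(\CP^n,\mathbb Z)$. But $S$, being a compact complex hypersurface in $\CP^n$, has $[S]^2>0$, contradicting $[S]\cdot[\tilde S]=0$. (The paper reduces to $n=2$ by slicing, where $S$ is a projective curve with positive self-intersection.) This avoids Steinness, exhaustion functions, the extension of the foliation, and holonomy altogether.
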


\begin{proof}
We follow Mishchenko~\cite{Mi1}. Without loss of generality assume that $n=2$. Suppose that 
$S\subset M$ is a compact leaf, hence a smooth projective curve. Since $M$ is orientable, we may consider a small 
perturbation of $S$ by translating it along the positive normal direction to $M$. The perturbed $\tilde S \subset \CP^2$ 
is a smooth real surface with $S \cap \tilde S = \varnothing$. But this contradicts the fact 
that any complex projective curve in $\CP^2$ has positive self-intersection index.
\end{proof}

A hypersurface in $\C^n$ is called real algebraic if it can be defined as a zero locus of a real polynomial. The closure of such 
a hypersurface is a real algebraic hypersurface in $\CP^n$, possibly singular at infinity.

\begin{prop}
There does not exist smooth real algebraic Levi-flat hypersurface in $\CP^n$.
\end{prop}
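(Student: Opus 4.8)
As noted above, for $n>2$ the statement follows from the nonexistence of closed smooth Levi-flat hypersurfaces in $\CP^n$ established in Section~3, so I would assume $n=2$. The plan is to use real-algebraicity to produce a compact complex curve lying in $M$, and then to rule this out by the self-intersection argument of Proposition~\ref{p.compactleaf}.

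Write $M\cap\C^2=\{\rho=0\}$ with $\rho$ a real polynomial in $(z,\overline z)$ and $\nabla\rho\ne 0$ along $M$ (smoothness is assumed also at the points at infinity, which we cover by a suitable affine chart). Complexifying $\rho$, that is, replacing $\overline z$ by an independent variable $\zeta\in\C^2$, produces a polynomial $\Phi(z,\zeta)$, holomorphic in both groups of variables, with $\Phi(z,\overline z)=\rho(z,\overline z)$. For $w\in\C^2$ the associated Segre variety $Q_w=\{z:\Phi(z,\overline w)=0\}$ is then an algebraic hypersurface, and $w\in Q_w$ precisely when $w\in M$. I claim that for $w\in M$ the germ of $Q_w$ at $w$ is exactly the leaf of the Levi foliation through $w$ and is contained in $M$: in the Cartan normal coordinates of Proposition~\ref{p.1}, in which $M=\{z_2+\overline z_2=0\}$, one computes directly that $Q_w=\{z_2=w_2\}$ for $w\in M$, which is both the leaf through $w$ and a subset of $M$; the general case follows because the germ of the Segre variety is a biholomorphic invariant of the germ of $M$. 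Hence the irreducible component $Q_w^{0}$ of $Q_w$ through $w$ contains a nonempty open set on which $\rho$ vanishes, so $\rho\equiv 0$ on $Q_w^{0}$ by the identity principle on the irreducible variety $Q_w^{0}$. Therefore the projective closure $V:=\overline{Q_w^{0}}\subset\CP^2$ is a compact irreducible algebraic curve contained in $\overline M=M$.

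Finally, $V$ is a compact complex curve of some degree $d\ge 1$, so $[V]\cdot[V]=d^{2}>0$ in the homology of $\CP^2$. Since $M$ is smooth and, by Proposition~\ref{p.or}, orientable and two-sided, a tubular neighbourhood of $M$ allows us to push $V$ slightly to one side of $M$, obtaining a $2$-cycle $\tilde V$ homologous to $V$ and disjoint from it, whence $d^{2}=[V]\cdot[\tilde V]=0$, a contradiction. This last step is precisely the argument in the proof of Proposition~\ref{p.compactleaf}; although stated there for a smooth compact leaf, it applies verbatim to any compact complex curve contained in $M$ once the self-intersection is read homologically. I expect the main obstacle to be the middle paragraph: carefully justifying that real-algebraicity propagates the local inclusion $Q_w\subset M$ to the full irreducible component $Q_w^{0}$ and its projective closure, while controlling the possible singularities of the Segre varieties. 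The remaining steps are either formal or a direct repetition of Proposition~\ref{p.compactleaf}.
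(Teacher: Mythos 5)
Your proposal is correct and follows essentially the same route as the paper: complexify the defining polynomial, use the Segre variety $Q_w$ of a point $w\in M$ (which coincides with the leaf through $w$ and lies in $M$) to produce a compact complex curve inside $M$, and rule it out by the push-off/self-intersection argument of Proposition~\ref{p.compactleaf}. The only differences are cosmetic: you restrict to $n=2$ (unnecessary, since the paper's argument works for all $n$) and you supply the identity-principle details that the paper delegates to the references \cite{SS,PSS}.
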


\begin{proof}
Arguing by contradiction, suppose such $M$ exists and is given by a real polynomial $P(z,\overline z)$. We complexify $P$ by replacing $\overline z$ with an independent variable $\overline w$. For every fixed $w \in \C^n$ the algebraic hypersurface 
$Q_w = \{ z\in \C^n : P(z,\overline w)=0\}$ is called the Segre variety of $w$ associated with~$M$. The family of Segre varieties is, in fact, a biholomorphic invariant of $M$, see, e.g.,~\cite{PSS}. Further, if $w\in M$, then $w\in Q_w \subset M$, and $Q_w$ agrees with the leaf of the Levi foliation passing through $p$, see~\cite{SS}. This means that $M$ contains a closed leaf, which contradicts Prop.~\ref{p.compactleaf}. Another way to reach a contradiction is simply to observe that the leaves of $M$ are projective varieties that must have a nonempty intersection, which implies that $M$ must have singularities.
\end{proof}

Segre varieties can be defined locally near any point on any real analytic hypersurface, including singular points. If $M$ is Levi-flat and is tangent to a holomorphic foliation, then near the singularities the family of Segre varieties can be used to study the singularities of foliations, for example existence of a germ of a Levi-flat tangent to a holomorphic foliation is equivalent to the existence of a local meromorphic first integral, see~\cite{BuGo, CerNeto, Bru2}.

\bigskip

Let us now draw the connection between Levi-flat hypersurfaces and nontrivial minimal sets. Roughly speaking, the holonomy of a 
leaf $L$ of a foliation is a map $\pi_1(L,p) \to \text{Diff} (\Sigma,p)$, $p\in L$, from the fundamental group of $L$ 
into the group of germs of diffeomorphisms of the transverse section $\Sigma$ at $p$ to the foliation, see, e.g.,~\cite{CS} or~\cite{CC} for a precise definition.
Cerveau~\cite{Cer} proved the following.
\begin{theorem}\label{t.S}
Suppose that $\mathcal F$ is a holomorphic foliation on $\CP^n$, $n\ge 2$, and $\mathcal M$ is a nonempty nontrivial 
minimal set. Then there exists a point $p\in \mathcal M$ such that the leaf $L_p$ of $\mathcal F$ has hyperbolic holonomy,
i.e., its holonomy group contains a map $f$ with $|f'(0)|<1$. Further, the following dichotomy holds: either 

(1) $\mathcal M$ is a real-analytic Levi-flat hypersurface, or 

(2) $\text{Hol}: \pi_1 (L_p, p) \mapsto \text{ Diff} (\C,0) $ is a linearizable abelian group. 
\end{theorem}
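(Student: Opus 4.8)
The plan is to analyze the holonomy pseudogroup of $\mathcal F$ acting on a transversal $\Sigma \cong (\C,0)$ near the minimal set $\mathcal M$, and to extract a contracting element from the recurrence forced by minimality. First I would fix a regular point $p \in \mathcal M$ and a small holomorphic transversal $\Sigma$ through $p$; since $\mathcal M$ is invariant and closed, $K := \mathcal M \cap \Sigma$ is a nonempty compact subset of $\Sigma$, invariant under all holonomy germs defined near points of $K$. Because $\mathcal M$ is minimal, every orbit of the holonomy pseudogroup meeting $K$ is dense in $K$. The key dynamical input is that a compact invariant set of a pseudogroup of local biholomorphisms of $(\C,0)$ which is \emph{not} reduced to a "small" (totally disconnected, measure-zero) set must carry a contraction: if no holonomy germ $f$ with $|f'(0)|<1$ existed, all holonomy germs fixing points of $K$ would have derivative of modulus $\ge 1$ there, and one shows (via the classical theory of groups of germs in $\mathrm{Diff}(\C,0)$, e.g. Nakai's density theorem and the trichotomy between linearizable, resonant-formal, and "rich" non-solvable pseudogroups) that this is incompatible with $K$ being the support of a minimal set unless the holonomy is very special — which is exactly case (2).

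More precisely, I would proceed as follows. Step 1: reduce to the holonomy of a single leaf. Using minimality and the fact that $L_p$ accumulates on all of $\mathcal M$ (the analogue of Theorem~\ref{t.min}(iii), valid for all $n>1$), loops in $L_p$ based at $p$ realize a pseudogroup whose orbit closure in $\Sigma$ is $K$; denote its image $G = \mathrm{Hol}(\pi_1(L_p,p)) \subset \mathrm{Diff}(\C,0)$. Step 2: establish the existence of a hyperbolic element. Suppose for contradiction that $|g'(0)| = 1$ for every $g \in G$. Then $G$ is a group of germs tangent-to-rotations; by Nakai's theorem (or the classification of finitely generated subgroups of $\mathrm{Diff}(\C,0)$, cf. Cerveau--Moussu, Loray), either $G$ is analytically linearizable and abelian — case (2) — or $G$ contains hyperbolic elements, contradicting the hypothesis, or $G$ is solvable non-abelian / resonant, in which case one analyzes the invariant set directly: a resonant non-linearizable germ $g(z) = z + a z^{k+1} + \cdots$ has attracting sepals, forcing $K$ to contain an open set or conversely an invariant analytic curve, and one rules these out using that $K \cap \Sigma$ is a proper compact subset whose complement in $\Sigma$ is Stein by the Fujita--Takeuchi argument reproduced in the proof of Theorem~\ref{t.min}. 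Step 3: having produced a hyperbolic holonomy germ $f \in \mathrm{Hol}(\pi_1(L_p,p))$ with $|f'(0)|<1$, examine its basin. The stable manifold of $f$ in $\Sigma$ is an embedded disc; saturating it by $\mathcal F$ and intersecting with $\mathcal M$, minimality forces either the saturation to fill out a neighborhood whose trace on $\mathcal M$ is a real-analytic hypersurface — case (1), which one recognizes as Levi-flat because the leaves are complex hypersurfaces and $\mathcal M$ has real codimension $1$ there — or the dynamics transverse to $L_p$ to be a one-dimensional phenomenon governed entirely by the abelian linearizable holonomy, which is case (2).

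The main obstacle, and where the real content lies, is Step 2: controlling the holonomy group when it contains no hyperbolic element. This is precisely the place where the deep structure theory of germs of diffeomorphisms of $(\C,0)$ — linearization (Poincar\'e--Siegel--Brjuno), the Camacho--Sad/formal-normal-form analysis of tangent-to-identity and resonant germs, and the density/ergodicity statements of Nakai and Loray for non-solvable pseudogroups — must be invoked to force the rigid dichotomy. The geometric half, by contrast, is relatively soft: once a contraction or a Levi-flat trace is in hand, the conclusion follows by saturating transversals along the foliation and appealing to the Steinness of $\CP^n \setminus \mathcal M$ to exclude the "intermediate" possibilities (invariant curves, Cantor-type invariant sets with positive transverse capacity, etc.). I would expect the write-up to cite Cerveau~\cite{Cer} for the hard group-theoretic input and keep the exposition at the level of the pseudogroup dynamics, since a fully self-contained proof of the $\mathrm{Diff}(\C,0)$ structure theory is well beyond the scope of a survey.
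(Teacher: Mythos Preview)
The paper does not give a proof of Theorem~\ref{t.S}; it is stated as a result of Cerveau~\cite{Cer} and immediately followed by commentary on its relevance, with no argument supplied. There is therefore nothing in the paper against which your proposal can be compared.

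As a separate remark on the proposal itself: your outline is a reasonable heuristic picture of why such a dichotomy should hold, but Step~2 is where the real difficulty lies and your treatment there is too loose to count as a proof sketch. The trichotomy you invoke for subgroups of $\mathrm{Diff}(\C,0)$ is not quite the one you state (Nakai's theorem concerns non-solvable groups and gives density of orbits, not directly the existence of hyperbolic elements; tangent-to-rotation groups need not be linearizable, abelian, or contain hyperbolic elements, and the ``resonant'' case you describe does not exhaust the possibilities). More importantly, Cerveau's actual argument for the existence of a hyperbolic holonomy element does not proceed via a classification of germ groups at all: it uses the global positivity of the normal bundle of $\mathcal F$ on $\CP^n$ (ultimately traceable to the Baum--Bott type index computations) to produce a contracting holonomy map directly. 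Your Step~3 conflates two different mechanisms --- the saturation of the contracting disc and the abelian-linearizable alternative --- in a way that does not cleanly separate cases (1) and (2); in Cerveau's proof the dichotomy arises from analyzing whether the closure of a certain holonomy orbit in the transversal is a real-analytic curve or not, which is a more delicate argument than ``saturating and seeing what happens.'' If you want to include a genuine sketch, you should look at \cite{Cer} and \cite{CLNS1} rather than the general $\mathrm{Diff}(\C,0)$ structure theory.
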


It is this result that prompted the interest in Levi-flat hypersurfaces on projective spaces. By Lins Neto~\cite{LN} (see the end of Section~\ref{s.LN1}), nontrivial minimal sets in $\CP^n$ do not exist for $n>2$, and so Theorem~\ref{t.S} is (hypothetically) 
nontrivial only for $n=2$. 

\section{Nonexistence of Levi-flats in $\CP^n$, $n\ge 3$.}

\begin{theorem}\label{t.1}
There does not exist a closed nonsingular real analytic Levi-flat hypersurface in $\mathbb{CP}^n$ for $n\ge 3$.
\end{theorem}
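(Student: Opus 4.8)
The plan is to derive a contradiction from the existence of such a Levi-flat $M\subset\CP^n$, $n\ge3$, by combining the foliation-extension result of Proposition~\ref{p.folext} with the structural results about holomorphic foliations on $\CP^n$ established earlier. First I would invoke Proposition~\ref{p.folext}: since $M$ is a nonsingular real analytic Levi-flat hypersurface, the Levi foliation $\mathcal L$ on $M$ extends to a holomorphic foliation $\mathcal F$ on some open neighbourhood $V$ of $M$ in $\CP^n$. The crucial step is then to \emph{globalize} this: one must show that $\mathcal F$ extends to a singular holomorphic foliation on all of $\CP^n$. The natural mechanism is that $\CP^n\setminus M$ consists (by Proposition~\ref{p.st}) of two Stein domains $X_1,X_2$; the 1-forms defining $\mathcal F$ near $M$ give sections of an appropriate bundle over the annular neighbourhood $V$, and using that $X_1,X_2$ are Stein (so their $\overline\partial$-cohomology and $H^1$ with coefficients in coherent sheaves vanish in the relevant degrees, by Cartan's Theorem~B and Hartogs-type extension across the Stein pieces) one extends the foliation holomorphically over each $X_i$. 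This produces a global singular holomorphic foliation $\mathcal F$ on $\CP^n$ having $M$ as an invariant set, with $M\cap\operatorname{Sing}\mathcal F=\varnothing$ (since $\mathcal L$ is a genuine nonsingular foliation on $M$).

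Once the global foliation $\mathcal F$ is in hand, the contradiction is quick. Every leaf of $\mathcal L$, being a complex hypersurface contained in the compact set $M$, has closure contained in $M$, hence disjoint from $\operatorname{Sing}\mathcal F$; thus $\mathcal F$ admits a leaf whose closure avoids the singular locus. Since $M$ is compact and invariant, it contains a nontrivial minimal set $Y$ of $\mathcal F$ (take a minimal closed invariant subset of $M$; it misses $\operatorname{Sing}\mathcal F$). Now I would apply the analogue of Theorem~\ref{t.min} for $\CP^n$ together with the observation recorded in the text that Lins Neto~\cite{LN} proved there are no nontrivial minimal sets in $\CP^n$ for $n>2$. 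Alternatively, and more self-containedly, one restricts $\mathcal F$ to a generic $\CP^2\subset\CP^n$ chosen (as in the proof of Proposition~\ref{p.cod2}) to meet $M$ transversally in a real analytic Levi-flat hypersurface $M'\subset\CP^2$ with $M'$ invariant under the induced foliation $\mathcal G$ and $M'\cap\operatorname{Sing}\mathcal G$ controlled; but this reduces to the genuinely hard $n=2$ case, so instead I rely on the stronger nonexistence of nontrivial minimal sets, which does hold for $n\ge3$.

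The main obstacle is unquestionably the globalization step: extending the locally-defined holomorphic foliation $\mathcal F$ from the neighbourhood $V$ of $M$ to all of $\CP^n$ as a singular holomorphic foliation. The local extension near $M$ is given, and extension into each Stein half $X_i$ should follow from Levi-flatness of $bX_i=M$ together with Steinness, via solving a $\overline\partial$-problem or by a Hartogs-type argument for the defining 1-forms of the foliation; but one must check carefully that the line-bundle cocycle $\{g_{jk}\}$ for $N_{\mathcal F}$ extends, that the 1-forms patch across $M$ into the Stein regions without picking up essential singularities, and that the extended foliation still has singular locus of codimension $\ge2$. I expect this to occupy the bulk of the argument; the deduction of the contradiction from a global foliation with a nontrivial minimal set, by contrast, is immediate given the cited results. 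It is worth noting, as the introduction emphasizes, exactly why this scheme collapses when $n=2$: there the restriction trick is vacuous and nontrivial minimal sets are not known to be excluded, so no contradiction ensues.
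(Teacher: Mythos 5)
Your scheme is the same as the paper's first proof (Lins Neto~1): extend the Levi foliation to a holomorphic foliation $\mathcal F$ on a neighbourhood of $M$ (Prop.~\ref{p.folext}), use that $\CP^n\setminus M$ is a union of two Stein pieces $X_1,X_2$ (Prop.~\ref{p.st}), globalize $\mathcal F$ to $\CP^n$, and contradict the structure theory of singular foliations on $\CP^n$. Your final detour through minimal sets is logically admissible (citing Lins Neto~\cite{LN}) but adds nothing: once the global singular foliation exists, Prop.~\ref{p.cod2} produces an irreducible codimension-two component of $\text{Sing}\,\mathcal F$, which for $n\ge 3$ is a compact analytic set of positive dimension, hence cannot lie in the Stein pieces nor in the neighbourhood of $M$ where $\mathcal F$ is nonsingular --- and this direct argument is exactly how the minimal-set nonexistence you invoke is proved in the first place.

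The genuine gap is the globalization step, which you rightly single out as the main obstacle but then only assert, and the mechanism you gesture at would not do the job. Cartan's Theorem~B, vanishing of $H^1$ with coherent coefficients, or solvability of $\overline\partial$ on $X_i$ are statements about data given on all of $X_i$; here the foliation is given only on a collar of $bX_i=M$, so the problem is a Hartogs-type filling of a compact hole, not a cohomology-vanishing problem on the Stein halves (there is also no ``patching across $M$'' to worry about, since $\mathcal F$ is already defined on a two-sided neighbourhood of $M$). The actual argument runs as follows: write $\mathcal F$ locally by integrable $1$-forms $\omega_j=\sum_k g^j_k\,dz_k$ with $\omega_j=h_{jl}\omega_l$; the ratios $g^j_k/g^j_n$ are independent of $j$ and define global meromorphic functions on the collar; choose a strictly plurisubharmonic Morse exhaustion $f$ of $X_i$ and extend across the sublevel sets $\{f\le t\}$ step by step, at each strictly pseudoconvex boundary point constructing a Hartogs domain inside $\{f>t\}$ whose envelope of holomorphy contains the point and applying the Levi extension theorem for meromorphic functions~\cite{S74}, then recovering an integrable holomorphic $1$-form (hence the foliation, with singular set of codimension $\ge 2$ after clearing common factors); alternatively, view $\mathcal F$ as a meromorphic map into a projectivized bundle and invoke Ivashkovich~\cite{Iv}. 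Steinness of $X_i$ enters through the existence of the exhaustion and the fact that the envelope of holomorphy of the collar is all of $X_i$, not through Theorem~B. Finally, one must exclude multivaluedness (monodromy) of the stepwise extension when the extension domains overlap --- a nontrivial technical point you do not mention, treated in~\cite{Can} and~\cite{MP}. Without this extension argument your proposal reduces to a restatement of the strategy rather than a proof; your ``more self-contained'' alternative of slicing by a generic $\CP^2$ indeed collapses to the open $n=2$ problem, as you yourself note.
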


There are several published papers with the proofs for $n=2$, but all of them are believed to contain serious gaps.
In this section we outline three different proofs for $n\ge 3$. Two of them are due to Lins Neto~\cite{LN}. The third proof presented here is due to Siu~\cite{Siu}. Siu's result applies to smooth Levi-flat hypersurfaces, which is a much more difficult problem, but in the real analytic category Siu's proof can be simplified.

\subsection{Lins Neto 1.}\label{s.LN1}
Arguing by contradiction, suppose that $M \subset \CP^n$, $n>2$, is a closed real analytic Levi-flat hypersurface. By Prop.~\ref{p.folext} the Levi foliation $\mathcal L$ on $M$ extends to a (nonsingular) holomorphic foliation $\mathcal F$ in an
open neighbourhood $U$ of $M$ in $\CP^n$. By Prop.~\ref{p.st}, $M$ divides $\CP^n$ into two Stein manifolds, call them $X_1$ and $X_2$. The principal step of the proof is to show that $\mathcal F$ extends as a singular holomorphic foliation on each $X_j$. This effectively gives an extension of $\mathcal F$ to $\CP^n$ as a singular holomorphic foliation $\mathcal{\tilde F}$. By Prop.~\ref{p.cod2}, $\mathcal{\tilde F}$ necessarily contains a component $Y\subset\text{Sing}\, \mathcal{\tilde F}$ of codimension 2. Since a Stein manifold cannot contain a compact complex analytic variety of positive dimension, for $n>2$ we conclude that $Y$ cannot be contained in $X_1 \cup X_2$. On the other hand, near $M$ the foliation $\mathcal{\tilde F}$ agrees with $\mathcal F$ and is nonsingular. This is a contradiction that shows that $M$ does not exist. Note that if $n=2$, then 
the singularities of $\mathcal{\tilde F}$ are isolated points, and there is no contradiction. 

It remains to show that $\mathcal F$ extends to $X_j$, $j=1,2$. Note that $X_j \setminus U$ is a compact subset of $X_j$, and  
the extension result for foliations can be considered as a version of the Hartogs theorem (Kugelsatz) on removability of compact singularities for holomorphic functions.  Lins Neto offers the following extension argument. 

Let $X$ be one of the connected components of $\CP^n \setminus M$, and $f: X \to \R$ be a strictly plurisubharmonic exhaustion function. 
After a small $C^2$-perturbation we may further assume that $f$ is also a Morse function. 
Let $X_t = \{ z \in X : f (z) \le t\}$, these are strictly pseudoconvex bounded subdomains of $X$ for almost all $t\in\R$. Then the foliation $\mathcal F$ is defined on $X \setminus X_{t_0}$ for some $t_0>>1$. The idea is to show that if $\mathcal F$ is a holomorphic foliation on 
$X \setminus X_{t_1}$ for some $t_1 \in \R$, then there exists $t_2\in \R$ such that $t_2 < t_1$ and $\mathcal F$ extends as a holomorphic foliation to $X \setminus X_{t_2}$. This will prove the assertion.

Since the level sets $\{f = t\}$ are compact for all $t\in \R$, it suffices to show that for any point $p \in \{f=t_1\}$ there exists $\e >0$ such that the foliation extends to $B(p, \e)$, the ball centred at $p$ of radius $\e$. This can be proved using Hartogs pseudoconvexity, see, e.g.,~\cite{Ran} for relevant results.  Recall that a Hartogs figure in $\C^n$ is defined as 
$$
H_{st} = \left\{z=(z',z_n) \in \C^{n-1}\times \C : |z'|<r', \ |z_n|<1\} \cup \{|z'|<1, \ r_n <|z_n| < 1 \right\},
$$
where $z=(z',z_n)$, and $|z'| < r'$ means $|z_j|< r_j$, $j=1,\dots, n-1$, $r'=(r_1, \dots, r_{n-1})$; $0<r_j<1$, $j=1,\dots,n$.
By the Hartogs Extension theorem, any holomorphic function on $H_{st}$ extends holomorphically to 
the whole polydisc $\Delta=\{|z_j|<1, \ j=1,\dots,n\}$, which is the envelope of holomorphy of $H_{st}$. 
A stronger result, known as the Levi Extension theorem, is that any meromorphic function on $H_{st}$ extends to $\Delta$, 
see, e.g., Siu~\cite{S74}.
This is a special case of a general result stating that the envelope of meromorphy of a domain on a Stein manifold agrees 
with the envelope of holomorphy.
Now, a domain $H \subset X$ is called a Hartogs domain if it is a biholomorphic image of $H_{st}$. 
If $H=\phi(H_{st})$ for some biholomorphic map $\phi :\overline{H_{st}} \to \overline H$, then $\phi$ 
extends holomorphically to $\Delta$, 
and the set $\widehat H = \phi(\Delta)$ is the envelope of holomorphy of $H$, i.e., every function holomorphic 
(resp. meromorphic) on $H$ extends holomorphically (resp. meromorphically) to $\widehat H$. 

If $p$ is a strictly pseudoconvex point of $\{f =t\}$, there exists a local biholomorphic change of coordinates such that 
near $p$ the set $X_t$ is strictly geometrically convex. Then one can directly construct a Hartogs domain $H$ contained 
in $\{f > t\}$ with the property that $p\in \widehat H$. This can be used for the extension of $\mathcal F$ as follows. 
In the local coordinates near $p$, suppose that $\{U_j\}$ is an open cover of $\overline H$ and on each $U_j$ the foliation 
$\mathcal F$ is defined by a holomorphic 1-form
\begin{equation}\label{e.omj}
\omega_j = \sum_{k=1}^n g^j_k dz_k, \ \ g^j_k \in \mathcal O(U_j).
\end{equation}
Let $h_{jk} \in\mathcal O^*(U_j \cap U_k)$ be the compatibility functions so that we have
\begin{equation}\label{e.st}
g^j_k = h_{jl} \,g^l_k,\ \ k = 1, \dots, n .
\end{equation}
Without loss of generality suppose that $g^j_n \not\equiv 0$, so that $f_{jk} :=g^j_k/g^j_n$ is meromorphic in $U_j$. Then~\eqref{e.st} implies that if $U_{j,l} \ne \varnothing$, then $f_{jk} = f_{lk}$ on $U_{j,l}$, so for each $k=1,\dots,n-1,$ 
there exists a meromorphic function $F_k$ on $H$ such that $F_k|_{U_j} = f_{jk}$. By the Levi Extension theorem, 
$F_k$ can be extended to a meromorphic function on $\widehat H$, which contains the point $p$. 
Consider now a meromorphic 1-form on $\widehat H$ given by 
$$
\eta = dz_n + \sum_{k=1}^{n-1} F_k dz_k .
$$
There exists $h \in \mathcal O (\widehat H)$ and a holomorphic 1-form $\omega$ on $\widehat H$ such that $\eta = \frac{1}{h} \omega$. It follows that for all~$j$ we have $\omega|_{U_j} = g_j \omega_j$ for some $g_j \in \mathcal O^*(U_j)$. The form $\omega$ remains integrable and defines a foliation on $\widehat H$. This gives the required extension of $\mathcal F$ to a neighbourhood of $p$. 

The above argument does not discuss the possibility of a multiple-valued extension in case the extension procedure gives overlapping domains. This nontrivial technical issue can be resolved, see Canales Gonz\'alez~\cite{Can} for a detailed step-by-step argument for $n=2$, and Merker-Porten~\cite{MP} for a further in-depth  discussion.

Alternatively, one can argue as follows. A codimension one holomorphic foliation on a complex manifold $X$ of dimension 
$n$ is uniquely determined by an involutive distribution of complex hyper-planes in $TX$. For each $p\in X$, a choice of a hyperplane $H_p \subset T_pX$ defines a point in the Grassmannian $\text{Gr}_{\mathbb C}(n-1,n) \cong \CP^{n-1}$, which can be identified with the projectivization of $T_pX$. Therefore, a holomorphic foliation gives rise to a holomorphic section $s$ 
of the projectivized tangent bundle of $X$. When $X$ is Stein, then through a proper holomorphic embedding of 
$X\hookrightarrow\C^N$ we can represent the tangent bundle as a subbundle of the trivial bundle over $\C^N$, and
so we can view the section $s$ as a holomorphic map into $\CP^{N-1}$ with the values in the projectivized tangent bundle.
In local coordinates the map $s$ is given by the components $g^j_k$ in~\eqref{e.omj}. This is well-defined as seen 
from~\eqref{e.st}.

Therefore, the extension problem of the foliation $\mathcal F$ from $X_j\cap U$ to all of $X_j$ can be considered as a 
problem of extending the holomorphic map $s$ from $X_j \cap U$ into $\CP^{N-1}$ meromorphically to all of~$X_j$.
A general result of this type is proved by Ivashkovish~\cite{Iv}:  a meromorphic map from a domain in a Stein manifold into a compact K\"ahler manifold extends meromorphically to the envelope of holomorphy of the domain. It is based on the previous work of Shiffman~\cite{Sh}, who, in particular, gave a sheaf-theoretic proof of nonbranching of the extended map. The envelope of holomorphy 
of $X_j\cap U$ is all of~$X_j$ and the result follows. It should also be noted, that the meromorphic extension results are considerably simpler when the target is a projective space.

\bigskip

The same argument as above shows, in fact, that no holomorphic foliation $\mathcal F$ on $\CP^n$, $n>2$, admits a nontrivial 
minimal set. Indeed, if $K$ is such a set, then since $K$ is invariant, for any point $p \in K$ there exists a germ of complex hypersurface passing through $p$ and contained in $K$. This, and the result of 
Fujita~\cite{Fu} and  Takeuchi~\cite{Ta}  
implies that $\CP^n \setminus K$ is the union of Stein manifolds. As above, we obtain a contradiction by showing that the restriction of $\mathcal F$ to a connected component of $\CP^n \setminus K$ contains a singularity of dimension at least 1.

\subsection{Lins Neto 2.}
The second proof of nonexistence of Levi-flats in $\CP^n$, $n>2$, given by Lins Neto~\cite{LN}, has two steps: first show 
that the Levi-flat $M\subset \CP^n$ is simply-connected and then apply Haefliger's theorem. The application of Haefliger's 
theorem is straightforward, so the main technical step is to prove that a closed (smooth) Levi-flat hypersurface
$M \subset \CP^n$ is simply-connected when $n>2$. The argument for that, which originates in Bott's~\cite{Bo} proof of 
the Lefschetz Hyperplane theorem, is as follows.

Denote by $X_1$ and $X_2$ the two connected components of $\CP^n \setminus M$, these are Stein manifolds. As such  they 
admit strictly plurisubharmonic exhaustion Morse functions, say, $\psi_1$ and $\psi_2$. It follows from strict plurisubharmonicity that any critical point of $\psi_j$ has index at most $n$, see~\cite[Lem. 3.10.1]{For}. Consider the flow 
$\phi_j: \R \times X_j  \to X_j$, given by the gradient flow of $\psi_j$,  and let $V_j$ be the corresponding vector field on $X_j$, $j=1,2$. Then the singularities of $V_j$ are  points where $d\psi_j =0$, i.e., the Morse singularities of $\psi_j$, and $\psi_j$ is strictly increasing along the nonsingular orbits of the flow. Each singular point $p$ of $V_j$ is hyperbolic, and the stable manifold of $p$, defined by
$$
W^s(p) = \left\{q \in X_j: \lim_{t\to+\infty} \phi_j(t,q)=p \right\} ,
$$
has dimension that is equal to the Morse index of $p$, in particular, does not exceed $n$. Note that since $X_j$ have smooth boundary $M$, there are only finitely many singular points of $V_j$. Let $U$ be a tubular neighbourhood of $M$ that does not contain any singularities of $V_j$. Then for a singular point $p\in X_j$ we may assume that $W^s (p) \subset X_j \setminus \overline U$. After a cutoff we may assume that the flows $\phi_j$ are stationary in $\tilde U \cap X_j$, where 
$\tilde U \subset U$ is a smaller tubular neighbourhood of $M$. This gives a flow $\phi$ on $\CP^n$. Let $W$ be the union 
of all stable manifolds of the flow $\phi$, this is a finite union of manifolds of dimension at most $n$. 

Consider now a loop $\gamma: S^1 \to M$, which we may assume, without loss of generality, to be smooth. 
Since $\tilde U$ is a deformation retract of $M$, for the proof of simple connectivity of $M$ it suffices to show 
that $\gamma$ is contractible in $\tilde U$. Since $\CP^n$ is simply-connected, $\gamma$ is contractible in 
$\CP^n$, and therefore, there exists a smooth disk $D\subset \CP^n$ with boundary $\gamma$. When $n>2$, 
we have 
$$
\dim D + \dim W \le 2 + n < 2n = \dim_{\R} \CP^n .
$$
Therefore, by the Transversality theorem (see, e.g.,~\cite[Ch 3]{Hi}), we may assume that $D$ is chosen in such a way that 
$D \cap W = \varnothing$. We now deform the disc $D$ along the flow $\phi$ constructed above. Since $D$ avoids all stable manifolds, and the exhaustion functions $\phi_j$ are increasing along $\phi$, a deformation of $D$ will be contained in $\tilde U$, and this shows that $M$ is simply-connected. Note that the proof fails when $n=2$ because the transversality does not imply that $D$ can be chosen to be disjoint from $W$. 

To complete the proof of nonexistence, one can now apply Haefliger's theorem~\cite{Ha}. It states that a codimension one smooth real foliation with a null-homotopic closed transversal necessarily has a leaf with one-sided holonomy, in other words, the holonomy group contains a map which is the identity on one side of the transversal, but not on the other. This cannot happen, in particular, if the  map is real-analytic, which is the case if the foliation is real analytic. Now suppose that $M$ is a closed real analytic Levi flat hypersurface in $\CP^n$, $n>2$. Then $M$ is simply connected, and the Levi foliation on $M$ by complex hypersurfaces is real analytic.
Any codimension one foliation on a compact manifold has a closed transversal (see, e.g.,~\cite{CS}), which is null-homotopic 
by simple connectivity of~$M$.  But this contradicts Haefliger's theorem, and so such $M$ does not exist. 

\subsection{Siu} The proof of Siu~\cite{Siu} is technically more involved and applies to the case of $M \subset \CP^n$, $n>2$, with finite smoothness. However, when $M$ is real analytic, certain steps of Siu's argument automatically hold. The proof below is a simplified version of the argument given by Brunella~\cite{Br8}.

Let $M\subset \CP^n$ be a closed real analytic Levi-flat hypersurface, $n>2$. By Prop.~\ref{p.folext}, the Levi foliation 
on $M$ extends to a nonsingular holomorphic foliation $\mathcal F$ in a neighbourhood of $M$. 
Let $N_{\mathcal F}$ be the normal bundle to $\mathcal F$. 
Then $N_{\mathcal F}|_{M}$ is topologically trivial because it has a nonvanishing section---the unit normal to $M$ 
(recall that $M$ is orientable by Prop.~\ref{p.or}). It is well-known that $T^{1,0}\,\CP^n$, equipped with the Fubini-Study metric, is 
Griffiths positive, therefore, $N_{\mathcal F}$, as a quotient bundle of $T^{1,0}\,\CP^n$,  inherits a hermitian metric 
with positive curvature (see Demailly~\cite[Ch.VII,  $\S$6]{D}). Let $\omega$ be the real closed $(1,1)$-form corresponding 
to the curvature, it is exact on $M$, and therefore on its small neighbourhood $U$. Then we can write
$$
\omega|_U = d \beta = (\partial + \overline \partial) (\beta^{0,1} +  \beta^{1,0})=\partial \beta^{0,1}+ 
\overline \partial \beta^{1,0}.
$$
Since $\omega$ is real (i.e., $\omega = \overline{\omega}$), we may assume that 
$\beta^{1,0} = \overline{\beta^{0,1}}$, so that
\begin{equation}\label{e.4}
\omega = \partial \beta^{0,1} + \overline{\partial \beta^{0,1}}.
\end{equation}
Further, $d\beta = (d\beta)^{1,1}$ implies
$$
\overline\partial \beta^{0,1}=0 .
$$
The crucial observation is that on a Stein manifold $X$ of dimension $n>2$ the second cohomology group with coefficients
in $\mathcal O$ and with compact support vanishes, i.e., 
\begin{equation}\label{e.gr}
H^2_{\text{cpt}}(X, \mathcal O) =0 .
\end{equation}
Indeed, by Serre's duality (e.g.,~\cite[Ch.VI, \S7]{D}), this group is isomorphic to $H^{n-2}(X, K_X)$, which is trivial by 
Cartan's theorem B (e.g.,~\cite[Ch.VI, Sec. 6.7]{Ran}) when $X$ is Stein and $n>2$. Here $K_X$ is the canonical bundle.
Now, given a $\overline\partial$-closed form $\beta^{0,1}$ on $U$ we can extend it smoothly to each connected component $X_j$, $j=1,2$, of $\CP^n \setminus M$ to obtain a $(0,1)$-form
on $\CP^n$, not necessarily $\overline\partial$-closed. Denote by $\beta_j$ the restriction of this extension to each $X_j$. The form
$\overline\partial \beta_j$ has compact support in $X_j$, and so by~\eqref{e.gr} there exists compactly supported $h_j$ satisfying 
$\overline\partial h_j = \overline\partial\beta_j$. Then $\beta_j  - h_j$ is a $\overline\partial$-closed $(0,1)$-form in $X_j$ that agrees with $\beta^{0,1}$ near $M$.  This gives us a 
$\overline\partial$-closed extension of $\beta^{0,1}$ to all of $\CP^n$, call it~$\tilde \beta$. 
Since $H^{0,1}_{\overline\partial} (\CP^n) = H^0(\CP^n, \Omega^1) = 0$ (see, e.g.,~\cite[Sec$\,$0.3, p.$\,$48-49]{GH}),
there exists a smooth function $\phi$ on $\CP^n$ such that $\overline\partial \phi = \tilde \beta$. Then, setting
$\psi = i(\overline\phi - \phi)$ and using~\eqref{e.4}, we have
$$
\omega|_U = \partial\overline\partial\,\phi + \overline{\partial\overline\partial\,\phi} =
\partial\overline\partial (\phi - \overline{\phi}) = 
i \,\partial\overline\partial\,\psi.
$$
By the positivity of $\omega$ on $M$, it follows that $\psi $ is a strictly plurisubharmonic function on $M$. Since $M$ is compact, 
$\psi $ attains a maximum at some point $p \in M$. Considering the restriction of $\psi$ to the leaf of the Levi foliation passing 
through $p$, we obtain a contradiction with the Maximum principle for plurisubharmonic functions. This proves that $M$ does not 
exist. 

We see again, that for $n=2$ this proof fails because~\eqref{e.gr} does not hold for a general Stein manifold~$X$.

\section{Some generalizations}

There is vast literature with various generalizations of the nonexistence result by considering a 
wider class of complex manifolds or by lowering the regularity 
of $M$. In this section we discuss only some of these results, primarily those that are directly related to the nonexistence 
problem discussed in this paper. We begin with the work of Ni-Wolfson~\cite{NW}. 

\begin{theorem}[\cite{NW}]
Let $V$ be an irreducible compact K\"ahler manifold of complex
dimension $n$ with nonnegative holomorphic bisectional curvature of complex
positivity $\ell$. Then there are no real analytic Levi flat submanifolds of dimension
$m$ in $V$ when $m \ge 2 (n+1) - \ell$.
\end{theorem}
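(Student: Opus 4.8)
The plan is to adapt Siu's argument for Theorem~\ref{t.1} (the third proof above) to the higher‑codimension K\"ahler setting, carrying it out \emph{along the Levi foliation} rather than on the two halves of $V\setminus M$: when the CR codimension of $M$ exceeds $1$ there is no reason for $V\setminus M$ to have Stein pieces, so the $\overline\partial$‑theory used by Siu must be replaced by foliated arguments. First I would set up as follows. Assume, for contradiction, that $M\subset V$ is a closed real analytic Levi‑flat submanifold of dimension $m$; let $k$ be its CR dimension, so the Levi foliation $\mathcal L$ has complex $k$‑dimensional leaves and CR codimension $d=m-2k\ge 1$. By the general‑codimension version of Prop.~\ref{p.folext} (due to Rea), $\mathcal L$ extends to a nonsingular holomorphic foliation $\mathcal F$ of complex codimension $q=n-k$ on a tubular neighbourhood $W$ of $M$ in $V$, and its holomorphic normal bundle $N_{\mathcal F}=T^{1,0}V/T_{\mathcal F}$ is a rank‑$q$ holomorphic quotient of $(T^{1,0}V,g)$, where $g$ is the K\"ahler metric.

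The next step is to extract positivity along the leaves. Give $N_{\mathcal F}$ the quotient metric; by the Griffiths curvature formula for quotient bundles, its Chern curvature dominates the image of the ambient curvature. For a vector $v$ tangent to a leaf $L$ of $\mathcal F$, the hypothesis that $V$ has nonnegative holomorphic bisectional curvature of complex positivity $\ell$ means that the semipositive Hermitian form $w\mapsto R(v,\bar v,w,\bar w)$ on $T^{1,0}_xV$ is strictly positive on a subspace of complex dimension at least $\ell$; projecting onto the rank‑$q$ quotient $N_{\mathcal F}$ destroys at most $k=\dim L$ of those directions. The inequality $m\ge 2(n+1)-\ell$, which rearranges to $\ell\ge 2q+2-d$, is exactly what is needed to guarantee that enough positive directions survive this projection (and the further constraint imposed by the position of $M$ inside $V$) to produce a closed real $(1,1)$‑form $\alpha$ on $W$ --- a Chern form of a suitable determinant or exterior‑power line bundle built from $N_{\mathcal F}$ --- whose restriction to every leaf of $\mathcal L$ is a \emph{strictly positive} $(1,1)$‑form. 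Using the orientability of $M$ and of $\mathcal L$ (Prop.~\ref{p.or} together with the Lefschetz‑type input of this section), one checks that this line bundle has vanishing real first Chern class on $M$, so $\alpha|_M$ is exact; since $W$ deformation retracts onto $M$, $\alpha$ is exact on $W$.

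Finally I would close the argument by a maximum‑principle contradiction. Since $M$ is compact, $\mathcal L$ admits a minimal set $K\subseteq M$, which --- $M$ being Levi‑flat --- is saturated by the complex leaves of $\mathcal L$. Solving a foliated $\overline\partial$‑problem along $K$ (or averaging $\alpha$ against a holonomy‑invariant transverse measure of $\mathcal L|_K$) should produce a continuous function $\varphi$ near $K$ whose restriction to each leaf is strictly plurisubharmonic with $i\partial_{\mathcal L}\overline\partial_{\mathcal L}\varphi=\alpha|_{\mathcal L}$. As $K$ is compact, $\varphi$ attains its maximum on $K$ at some $p$; the leaf $L_p$ is dense in $K$, so $\varphi|_{L_p}$ attains its supremum at an interior point and is therefore constant on $L_p$ by the maximum principle for plurisubharmonic functions --- contradicting strict plurisubharmonicity. (Alternatively, pair the leafwise top power $\alpha^k$ with a foliation cycle of $\mathcal L$ supported on $M$: leafwise positivity makes the pairing positive, while exactness of $\alpha$ makes it zero.) Hence $M$ cannot exist. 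For $V=\CP^n$ one has $\ell=n$, and $m\ge 2(n+1)-\ell=n+2$ holds for a hypersurface ($m=2n-1$) exactly when $n\ge 3$, so Theorem~\ref{t.1} is recovered.

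The hard part will be Step~2: turning the single numerical inequality $m\ge 2(n+1)-\ell$ into \emph{leafwise} strict positivity of $\alpha$. One must select the correct determinant (or exterior‑power) line bundle and keep careful track of how many positive directions of the ambient bisectional curvature survive both the passage to the quotient $N_{\mathcal F}$ and the constraints coming from the embedding $M\subset V$ --- this bookkeeping is where the quantity $2(n+1)-m$ comes from. A secondary difficulty, absent in Siu's $\CP^n$ proof, is that $V\setminus M$ need not be Stein, so there is no global $\overline\partial$‑solution on $V$; the potential $\varphi$ must instead be constructed along the foliation near a minimal set, which requires the relevant leafwise $\overline\partial$‑theory (and the transverse invariant measure) to be well behaved, and this is the technical core of the proof of Ni--Wolfson~\cite{NW}.
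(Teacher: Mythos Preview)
The paper does not give a full proof of this theorem; it only records that Ni--Wolfson's argument is ``similar to that of~\cite{LN}: show that $M$ is simply-connected and then use Haefliger's theorem.'' In other words, the actual proof follows the \emph{second} Lins Neto strategy (Section~3.2): Morse theory on a strictly plurisubharmonic exhaustion, combined with the curvature hypothesis, forces $\pi_1(M)=0$, and then Haefliger's theorem on one-sided holonomy rules out the real analytic codimension-one foliation $\mathcal L$ on $M$. Your proposal instead attempts to generalize Siu's argument (Section~3.3), so the two routes are entirely different.

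Beyond the difference in strategy, your outline has a genuine gap at the exactness step. You claim that ``using the orientability of $M$ and of $\mathcal L$ (Prop.~\ref{p.or} together with the Lefschetz-type input of this section), one checks that this line bundle has vanishing real first Chern class on $M$.'' But Prop.~\ref{p.or} and the Hill--Nacinovich theorem are stated and proved only for hypersurfaces in $\CP^n$; they say nothing about submanifolds of arbitrary codimension in a general compact K\"ahler manifold. In Siu's proof the triviality of $N_{\mathcal F}|_M$ is a very special fact: $N_{\mathcal F}$ is a \emph{line} bundle and the unit normal to the orientable hypersurface $M$ furnishes a nonvanishing section. When $q=n-k>1$ there is no analogous nonvanishing section of $\det N_{\mathcal F}$ or of any exterior power, and no reason for the corresponding Chern form to be exact on $M$. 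Without exactness of $\alpha|_M$ the entire maximum-principle endgame collapses. The subsequent steps (solving a foliated $\overline\partial$-problem near a minimal set, or invoking a holonomy-invariant transverse measure) are also speculative---such transverse measures need not exist, and leafwise $\overline\partial$-theory of the kind you describe is not available off the shelf---but the exactness issue is already fatal. The Ni--Wolfson route via simple connectivity and Haefliger sidesteps all of this.
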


The complex positivity $\ell$ is defined as follows. The symmetric bilinear form 
$H_Y(W,Z)= \langle R(Y, JY)W, JZ\rangle, $ where $Y,W,Z \in TV$, is positive-semidefinite for $Y\ne 0$. 
Let $\mathcal N_Y$ be its null-space and $n(Y)$ be its dimension. Then the complex positivity of $V$ is
$\ell := \inf_{Y\ne 0} (n - n(Y))$. In particular, for $V=\CP^n$, $\ell = n$, and we obtain $m \ge n+2$, 
which for $n>2$ covers the case of real hypersurfaces, but for $n=2$ gives no information. The idea of the 
proof is similar to that of~\cite{LN}: show that $M$ is simply-connected and then use Haefliger's theorem. 

In~\cite{Siu} Siu gives the proof of nonexistence of smooth 
Levi-flat hypersurfaces in~$\CP^n$, $n>2$. Siu's original proof works for $C^{12}$-smooth $M$, but further improvements 
of the regularity of $M$ are possible, for example, Cao-Shaw~\cite{CSh} proved that there does not exist Lipschitz 
Levi-flat hypersurfaces in $\CP^n$, $n>2$, and Iordan-Matthey~\cite{IM} proved nonexistence for $M$ of class 
Sobolev $W^s$, for $s>9/2$.

To generalize the class of ambient manifolds, Ohsawa~\cite{Oh7} considered the case of compact K\"ahler 
manifolds. Using $\overline\partial$-methods and Hodge theory, he proved the following.

\begin{theorem}
Let $X$ be a compact K\"ahler manifold of dimension $n \ge 3$ and let $M$ 
be a real analytic Levi-flat in $X$ . Then, $ X \setminus M$ does not admit
any $C^2$ plurisubharmonic exhaustion function whose Levi form has at least
3 positive eigenvalues outside a compact subset of $X \setminus M$. In particular,
$X \setminus M$ is not a Stein manifold.
\end{theorem}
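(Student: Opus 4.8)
The plan is to run the argument of Siu and Brunella reproduced above, with the Griffiths positivity of $T^{1,0}\,\CP^n$ — which in the projective case supplied the crucial positivity at the end — replaced by information drawn from the hypothetical exhaustion function. The last sentence is immediate: a Stein manifold of dimension $n\geq 3$ carries a strictly plurisubharmonic exhaustion function, whose Levi form has $n\geq 3$ positive eigenvalues everywhere. So it suffices to show that $Y:=X\setminus M$ carries no $C^2$ plurisubharmonic exhaustion function $\psi$ whose Levi form has at least three positive eigenvalues outside a compact set $K\subset Y$, and I would argue by contradiction, assuming such a $\psi$ exists. Since $\psi$ is an exhaustion and $X$ is compact, every end of $Y$ accumulates on $M$; hence $K$ may be taken disjoint from a neighbourhood $V$ of $M$ in $X$, so that $i\partial\dbar\psi$ has at least three positive eigenvalues throughout $V\setminus M$.

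First, by Proposition~\ref{p.folext} the Levi foliation $\mathcal L$ extends to a nonsingular holomorphic foliation $\mathcal F$ on a neighbourhood $U\subset V$ of $M$, with normal bundle $N_{\mathcal F}$; exactly as in Siu's proof, $N_{\mathcal F}|_M$ is topologically trivial (the unit normal to $M$ is a nonvanishing section, $M$ being orientable by Proposition~\ref{p.or}), so the curvature form $\Theta$ of any Hermitian metric on $N_{\mathcal F}$ over $U$ is a real $(1,1)$-form with $[\Theta|_M]=0$ in $H^2(M,\R)$, and after shrinking $U$ one may write $\Theta|_U=\partial\beta^{0,1}+\overline{\partial\beta^{0,1}}$ with $\dbar\beta^{0,1}=0$, just as in the derivation of~\eqref{e.4}. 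Next, the hypothesis on $\psi$ makes $Y$, and each of its connected components, $(n-2)$-convex at infinity: it has a plurisubharmonic exhaustion whose Levi form has at least $3=n-(n-2)+1$ positive eigenvalues off a compact set. Following Ohsawa~\cite{Oh7}, I would feed this into $L^2$-estimates for $\dbar$ — the global plurisubharmonicity of $\psi$ being what promotes the Andreotti--Grauert finiteness in the top adjacent degree to an honest vanishing — to obtain, on each component $Y_0$ of $Y$,
\begin{equation*}
H^2_{\mathrm{cpt}}(Y_0,\mathcal O)=0 ,
\end{equation*}
the analogue of~\eqref{e.gr}; equivalently, by Serre duality, $H^{n-2}(Y_0,K_{Y_0})=0$. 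This is the only step where $n\geq 3$ and the count of three positive eigenvalues enter.

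With this vanishing in hand, the cohomological step of Siu's proof goes through: extend $\beta^{0,1}$ smoothly across $M$ to a $(0,1)$-form on $X$, observe that its $\dbar$ is $\dbar$-closed of type $(0,2)$ with compact support in $Y$, kill it by a compactly supported $\dbar$-primitive on each component of $Y$, and obtain a $\dbar$-closed $(0,1)$-form $\tilde\beta$ on all of $X$ agreeing with $\beta^{0,1}$ near $M$. The only new point is that for a general compact K\"ahler $X$ one need not have $H^{0,1}_{\dbar}(X)=0$; but the Hodge decomposition writes $\tilde\beta=\dbar\phi+\overline\eta$ with $\phi\in C^\infty(X)$ and $\eta\in H^0(X,\Omega^1)$, and since $\overline\eta$ is anti-holomorphic one has $\partial\overline\eta=0$, whence on $U$
\begin{equation*}
\Theta|_U=\partial\dbar\phi+\overline{\partial\dbar\phi}=i\,\partial\dbar\psi_1,\qquad \psi_1:=i(\overline\phi-\phi)\in C^\infty(X,\R),
\end{equation*}
and in particular $\Theta|_M=i\,\partial\dbar\psi_1|_M$.

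The hard part is the final step. In the projective case $\Theta$ was positive outright, so $\psi_1|_M$ was strictly plurisubharmonic and the maximum principle applied to $\psi_1$ restricted to the leaf of $\mathcal L$ through a maximum point of $\psi_1$ on the compact $M$ gave the contradiction; here $\Theta$ is merely the curvature of some metric on $N_{\mathcal F}$ and carries no positivity. One must therefore choose the metric so that $\Theta$ — hence $\psi_1$ — is strictly plurisubharmonic along the leaves of $\mathcal L$ near $M$, and this is where $\psi$ must be used again: since each side of $M$ is locally Stein, the signed distance to $M$ is almost-plurisubharmonic near $M$ (Takeuchi and Diederich--Ohsawa type estimates), which should let one localize $\psi$ near $M$ and compare it, in H.~Cartan's normal form (Proposition~\ref{p.1}, in which the Levi leaves are $\{z_n=\mathrm{const}\}\cap M$ with $z_n$ pluriharmonic transverse coordinate), so as to manufacture leafwise positivity of $N_{\mathcal F}$. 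Once $\psi_1|_M$ is strictly plurisubharmonic along the Levi leaves, compactness of $M$ and the maximum principle applied along the leaf through a maximum point of $\psi_1$ finish the proof. The delicate issue throughout is that $\psi$ blows up precisely along $M$, so all positivity must be extracted from it in a way that survives the limit toward $M$; by contrast the cohomological input $H^2_{\mathrm{cpt}}(Y_0,\mathcal O)=0$, though nontrivial, is a fairly standard vanishing for $q$-convex (weakly $1$-complete) manifolds.
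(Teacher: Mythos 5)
The survey states this theorem without proof, quoting Ohsawa~\cite{Oh7} and recording only that the argument uses $\dbar$-methods and Hodge theory, so your proposal can only be measured against that description and against the Siu--Brunella proof it adapts. Up to the identity $\Theta|_U=i\,\partial\dbar\psi_1$ your adaptation is sound: the extension of the Levi foliation, the exactness of the curvature of $N_{\mathcal F}$ near $M$, the compactly supported $\dbar$-correction, and the replacement of $H^{0,1}_{\dbar}(\CP^n)=0$ by the Hodge-theoretic observation that the harmonic part of $\tilde\beta$ is conjugate-holomorphic and hence annihilated by $\partial$ are all correct, and the last point is a genuine, necessary modification. One caveat: the vanishing $H^2_{\mathrm{cpt}}(Y_0,\mathcal O)=0$ is not ``a fairly standard vanishing.'' Andreotti--Grauert convexity gives only finite-dimensionality of $H^{n-2}(Y_0,K_{Y_0})$; upgrading finiteness to vanishing using the global plurisubharmonicity of $\psi$ and the K\"ahler metric is precisely the $L^2$/Hodge-theoretic core of~\cite{Oh7} (resting on Ohsawa's earlier theorems on very strongly $q$-convex K\"ahler manifolds). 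Deferring it is legitimate, but it is the main analytic content, not a routine citation.

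The genuine gap is your endgame. What your steps actually deliver is that the curvature of a metric on $N_{\mathcal F}$ near $M$ equals $i\partial\dbar$ of a function smooth on all of $X$; equivalently, after multiplying the metric by $e^{\pm\psi_1}$, that $N_{\mathcal F}$ is unitarily flat near $M$ --- with no positivity anywhere. Your plan to recover the projective-case positivity by choosing a metric with positive leafwise curvature, using Takeuchi/Diederich--Ohsawa almost-plurisubharmonicity of the signed distance, cannot work: those estimates need positive curvature of the ambient metric, and the inputs you invoke (two-sided local Steinness, Cartan's normal form) are purely local, whereas for the flat Levi-flats in complex tori of dimension $\ge 3$ (image of $\{\Im z_1=0\}$ in $\C^n/\Lambda$) exactly the same local data are present, yet the normal bundle of the Levi foliation is holomorphically trivial with compact leaves, so no leafwise positively curved metric can exist --- as Theorem~\ref{t.bri} asserts for every compact Levi-flat in dimension $\ge 3$. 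Thus the positivity you hope to ``manufacture'' is exactly what is absent in the situation this theorem addresses; the only global object available is $\psi$, which blows up along $M$ and whose positive eigendirections are uncontrolled relative to the foliation, and you give no argument converting it into a metric on $N_{\mathcal F}|_M$. The known proofs run the logic the other way: the Hodge-theoretic step is exploited as a flatness statement, producing a holomorphic (flat) connection on $N_{\mathcal F}$ near $M$ (implicit in \cite{Oh7}, explicit in \cite{AB3}, \cite{ABB}), and the contradiction is then extracted from that flat structure together with the convexity of $\psi$ (for instance, the restriction of $\psi$ to a leaf near $M$ retains at least two positive Levi eigenvalues). As written, your final step is a hope rather than a proof, and it is the step that carries the theorem.
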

This, in particular, implies that the examples of Ohsawa~\cite{Oh82} and Nemirovski~\cite{Ne} of Levi-flats with Stein 
complement cannot exist on compact K\"ahler manifolds of dimension $\ge 3$. 

Another generalization was obtained by Brinkschulte~\cite{Bri}.

\begin{theorem}[\cite{Bri}]\label{t.bri}
Let $X$ be a complex manifold of dimension $n\ge 3$. Then there does
not exist a smooth compact Levi-flat real hypersurface $M$ in $X$ such that the normal
bundle to the Levi foliation admits a Hermitian metric with positive curvature along
the leaves.
\end{theorem}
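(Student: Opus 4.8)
The plan is to adapt Siu's proof above to this more general setting. Since $M$ is only smooth, one may no longer assume (as in Prop.~\ref{p.folext}) that the Levi foliation $\mathcal L$ extends to a holomorphic foliation of a neighbourhood of $M$, and since $X$ is an arbitrary complex manifold the two local sides of $M$ need not be Stein; so the whole construction must be carried out intrinsically on the CR manifold $M$, using the tangential Cauchy--Riemann complex along the leaves of $\mathcal L$. The goal is exactly as in Siu's proof: to produce a smooth real-valued function $\psi$ on $M$ that is strictly plurisubharmonic along the leaves of $\mathcal L$. Given such a $\psi$, it attains a maximum at some $p\in M$ by compactness, and restricting $\psi$ to the leaf $L_p$ through $p$ yields a strictly plurisubharmonic function on the complex manifold $L_p$ with an interior maximum at $p$ (the leaf has no boundary), contradicting the maximum principle; hence $M$ cannot exist.

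To set this up, Levi-flatness makes $HM$ an integrable complex distribution, so $M$ carries a leafwise Dolbeault operator $\bar\partial_{\mathcal L}$, and $N_{\mathcal L}=T^{1,0}X|_M/H^{1,0}M$ is a CR line bundle on $M$. Since $M$ is orientable (Prop.~\ref{p.or}), the unit conormal trivializes $N_{\mathcal L}$ as a smooth complex line bundle, and in this trivialization the hypothetical metric $h$ of positive leafwise curvature has the form $e^{-\varphi}$ with $\varphi\in C^\infty(M)$; its leafwise Chern curvature $\Theta_h$ is then a leafwise $(1,1)$-form on $M$, positive and closed along every leaf. Because $N_{\mathcal L}$ is smoothly trivial, $c_1(N_{\mathcal L})=0$, so $\Theta_h$ can be represented by an exact $2$-form $d\beta$ on $M$ (after choosing a connection that restricts to the leafwise Chern connection on each leaf); decomposing $\beta$ into leafwise bidegrees, taking its $(0,1)_{\mathcal L}$-part $\beta^{0,1}_{\mathcal L}$, and using that $d\beta$ is of leafwise type $(1,1)$, one gets $\bar\partial_{\mathcal L}\beta^{0,1}_{\mathcal L}=0$ and, leaf by leaf, $\Theta_h=\partial_{\mathcal L}\beta^{0,1}_{\mathcal L}+\overline{\partial_{\mathcal L}\beta^{0,1}_{\mathcal L}}$. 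So everything reduces to solving the leafwise equation $\bar\partial_{\mathcal L}u=\beta^{0,1}_{\mathcal L}$ for a single $u\in C^\infty(M,\C)$: then $\psi:=i(\overline u-u)$ satisfies $i\partial_{\mathcal L}\bar\partial_{\mathcal L}\psi=\Theta_h$, which by positivity of $\Theta_h$ is the desired leafwise strictly plurisubharmonic function.

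The hard part will be this solvability, that is, the vanishing of the relevant first cohomology of the $\bar\partial_{\mathcal L}$-complex on $M$, and this is exactly where $n\ge 3$ must be used. The leaves are complex manifolds of dimension $n-1\ge 2$, typically noncompact and not pseudoconvex, so one cannot simply invoke H\"ormander leaf by leaf; one must solve with $L^2$-estimates that are uniform in the transverse parameter and patch the leafwise solutions over the compact base $M$. This is the analytic core of Brinkschulte's proof~\cite{Bri}: an $L^2$-theory for $\bar\partial_{\mathcal L}$ on $M$, whose key vanishing plays the role that $H^2_{\mathrm{cpt}}(X,\mathcal O)=0$ (for Stein $X$ of dimension $>2$) plays in Siu's argument. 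As with all three proofs above, this step has no analogue when $n=2$: there the leaves are Riemann surfaces, the required cohomology vanishing fails, no global leafwise potential $\psi$ can be manufactured, and the method yields nothing for complex surfaces.
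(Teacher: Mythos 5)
First, note that the survey does not actually prove Theorem~\ref{t.bri}: it is quoted from Brinkschulte~\cite{Bri}, with only a remark on its lineage (Brunella~\cite{Br8}, Ohsawa~\cite{Oh}) and its consequences, so there is no in-paper proof to compare yours with; your proposal has to stand or fall on its own.

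As it stands it has a genuine gap at its core. The reduction you perform (trivialize $N_{\mathcal L}$, write the leafwise curvature as $\partial_{\mathcal L}\beta^{0,1}+\overline{\partial_{\mathcal L}\beta^{0,1}}$ with $\overline\partial_{\mathcal L}\beta^{0,1}=0$, and produce a leafwise strictly plurisubharmonic $\psi$ to contradict the maximum principle) is a faithful transplant of Siu's scheme as presented in Section~3.3, and the endgame is fine. But everything then hinges on solving $\overline\partial_{\mathcal L}u=\beta^{0,1}$ globally on the compact Levi-flat $M$ with $u$ smooth (even continuous would do), i.e.\ on a vanishing theorem for the first leafwise Dolbeault cohomology of $M$. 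You do not prove this; you declare it to be ``the analytic core of Brinkschulte's proof'' and stop. That is not only an incomplete proof, it is also an inaccurate attribution: no such leafwise vanishing on $M$ is available or used in~\cite{Bri}. Leafwise $\overline\partial$-cohomology of compact Levi-flats is in general non-Hausdorff or infinite-dimensional, there is no analogue of H\"ormander theory uniform in the transverse parameter (the leaves are neither compact nor pseudoconvex, and the transverse direction is only a continuous/smooth parameter), and nothing in your sketch explains why $\dim_{\C}L=n-1\ge 2$ would rescue such a statement --- the role of $n\ge 3$ is simply asserted. The actual arguments (Brunella~\cite{Br8}, Ohsawa~\cite{Oh7},~\cite{Oh}, Brinkschulte~\cite{Bri}, and later~\cite{AB3},~\cite{ABB}) avoid precisely this: they exploit the leafwise positivity of the normal bundle to gain control on a one-sided neighbourhood of $M$ in $X\setminus M$ (strong pseudoconvexity of the complement near $M$, or a holomorphic connection on $N_{\mathcal L}$ and a $\partial\overline\partial$-problem for the difference of curvatures), and then run $L^2$ $\overline\partial$-methods there, which is where $n\ge 3$ enters through the number of positive eigenvalues --- playing the role that $H^2_{\rm cpt}(X,\mathcal O)=0$ for Stein $X$, $\dim X>2$, plays in Siu's argument. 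So your plan of carrying the whole construction ``intrinsically on $M$'' via $\overline\partial_{\mathcal L}$ is not a proof and, as a strategy, would need a new idea to make the solvability step even plausible. (A minor additional point: orientability of $M$, hence triviality of $N_{\mathcal L}$, is not automatic for general $X$ --- Prop.~\ref{p.or} is specific to $\CP^n$ --- though this is harmless after passing to a double cover.)
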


This result is based on the earlier work of Brunella~\cite{Br8}, who was the first to emphasize the crucial role 
of the positivity of the normal bundle of the foliation, and Ohsawa~\cite{Oh}, who conjectured this result 
in~\cite[Sec.5.1.1]{Oh18}. 
If in Theorem~\ref{t.bri} we take 
$X=\CP^n$ and $M$ a Levi-flat with the Levi foliation $\mathcal L$, then since $\CP^n$ admits a hermitian metric of positive curvature, we obtain a positive 
metric on $\mathcal N\mathcal L$. This contradicts the theorem, and therefore such $M$ does not exist. 
Thus, this result generalizes the nonexistence of Levi-flats in $\CP^n$, $n>2$, in the smooth category.
It should be also noted
that Brunella~\cite[Ex. 4.2]{Br8}, see also Ohsawa~\cite{Oh18}, gave an example showing that Theorem~\ref{t.bri} 
cannot hold for $n=2$ even when $X$ is compact K\"ahler. It was proved in ~\cite{I23} (the appendix is written jointly 
by Iordan and Lempert) that the Levi-flats arising in this example cannot be embedded into $\CP^n$.

In a different direction, an interesting generalization of nonexistence result in $\CP^n$ was obtained by Sargsyan~\cite{Sar}.

\begin{theorem}[\cite{Sar}]
There exists no smooth real hypersurface in $\CP^n$, $n \ge 3$, whose
Levi form has constant signature and satisfies one of the following conditions:

(i) the Levi form has at least two zero eigenvalues;

(ii) the Levi form has at least one zero eigenvalue and two eigenvalues of opposite signs.
\end{theorem}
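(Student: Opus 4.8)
The plan is to run the normal-bundle-positivity scheme of Siu and Brunella (cf.\ the ``Siu'' proof of Theorem~\ref{t.1} and Theorem~\ref{t.bri}), with the Levi foliation replaced by the \emph{Levi-null foliation}. Write the (constant) signature of the Levi form as $(a,b,c)$, so $a+b+c=n-1$, where $a,b,c$ count the positive, negative and zero eigenvalues; the key bookkeeping observation is that conditions (i) and (ii) together say precisely
\[
c\ge 1,\qquad a+c\ge 2,\qquad b+c\ge 2 .
\]
Since the signature is constant, the Levi form has constant rank $a+b$, so a classical computation shows the null subbundle $\mathcal K=\ker\mathcal L\subset HM$ is involutive and $M$ is foliated by complex submanifolds of dimension $c\ (\ge 1)$; call this foliation $\mathcal L_0$. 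As in Proposition~\ref{p.folext}, in the real analytic case $\mathcal L_0$ extends to a holomorphic foliation $\mathcal F$ of dimension $c$ (codimension $n-c$) on a neighbourhood $U$ of $M$; in the merely smooth case one must invoke a Siu-type CR reflection/extension argument, and this is the genuinely hard step.

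Next, equip the normal bundle $N_{\mathcal F}$ (rank $n-c$ on $U$) with the metric it inherits, as a holomorphic quotient of $T^{1,0}\CP^n$, from the Fubini--Study metric; by Griffiths positivity of $T^{1,0}\CP^n$ this metric has positive curvature, so the closed real $(1,1)$-form $\omega:=i\,\tmop{tr}\,\Theta_{N_{\mathcal F}}$ is strictly positive on every complex line tangent to a leaf of $\mathcal F$. The core of the proof is to show $\omega=i\,\partial\overline\partial\psi$ near $M$ for some function $\psi$. As in Siu's argument one checks that $\omega$ is $d$-exact near $M$ and writes $\omega=\partial\beta^{0,1}+\overline{\partial\beta^{0,1}}$ with $\overline\partial\beta^{0,1}=0$; it then remains to solve $\overline\partial\phi=\beta^{0,1}$ near $M$ and put $\psi=i(\overline\phi-\phi)$. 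For this one passes to the two components $X^+,X^-$ of $\CP^n\setminus M$: along $\partial X^-$ (resp.\ $\partial X^+$) there are $c$ ``positive directions'' coming from the positive curvature of $N_{\mathcal F}$ along the leaves of $\mathcal F$, plus $a$ (resp.\ $b$) more from the nonzero Levi eigenvalues, hence $a+c$ (resp.\ $b+c$) altogether; since these are $\ge 2$, Andreotti--Grauert together with Serre duality yield the vanishing of the relevant compactly supported cohomology of $X^\pm$ --- the substitute for the estimate $H^2_{\text{cpt}}(X^\pm,\mathcal O)=0$ used by Siu in the Stein case --- which lets one modify $\beta^{0,1}$ on $X^\pm$ to a globally $\overline\partial$-closed $(0,1)$-form on $\CP^n$ and then solve via $H^{0,1}_{\overline\partial}(\CP^n)=0$. (Alternatively, one may avoid extending $\mathcal F$ and argue on $M$ itself with the tangential Cauchy--Riemann complex, conditions (i)--(ii) then being exactly the Kohn--Rossi type hypotheses for the required CR cohomology vanishing.)

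Finally, once $\omega=i\,\partial\overline\partial\psi$ near $M$, positivity of $\omega$ along the leaves of $\mathcal L_0$ makes $\psi$ strictly plurisubharmonic on each leaf. Since $M$ is compact, $\psi|_M$ attains a maximum at some point $p$; the leaf of $\mathcal L_0$ through $p$ is a connected complex manifold of dimension $c\ge 1$ on which $\psi$ attains its maximum at $p$, contradicting the maximum principle for plurisubharmonic functions. Hence no such $M$ exists. (If one prefers, one may first intersect $M$ with a generic linear subspace, as in the proof of Proposition~\ref{p.cod2}, to reduce to the cases $c=2$ and $c=1$, but this does not substantially simplify the argument.) The principal obstacle is the smooth extension of $\mathcal L_0$ past $M$ --- a Siu-type reflection argument, which is what forces finite-differentiability hypotheses in results of this kind --- closely followed by the homological bookkeeping needed to make the $\overline\partial$-vanishing on the $(a+c)$- and $(b+c)$-convex sides $X^\pm$ match the two numerical conditions in the statement exactly.
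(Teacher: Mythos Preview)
The paper does not prove this theorem; it is quoted without proof in Section~4 as a result of Sargsyan~\cite{Sar}, so there is no proof in the paper to compare your proposal against.

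Your outline is a reasonable adaptation of the Siu/Brunella scheme and the numerical reformulation $c\ge 1$, $a+c\ge 2$, $b+c\ge 2$ is correct and illuminating. Two steps that you pass over would need real work. First, the assertion that $\omega=i\,\mathrm{tr}\,\Theta_{N_{\mathcal F}}$ is $d$-exact near $M$ does \emph{not} follow ``as in Siu's argument'': in the Levi-flat case $N_{\mathcal F}$ is a line bundle rendered topologically trivial on $M$ by the unit normal section, whence $c_1=0$; here $N_{\mathcal F}$ has rank $n-c$, and the vanishing of $c_1(N_{\mathcal F}|_M)$ in $H^2(M,\R)$ requires a separate argument. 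Second, in the smooth (non--real-analytic) case there is no two-sided holomorphic extension of the Levi-null foliation to a neighbourhood $U$, so one cannot literally define $N_{\mathcal F}$ and its curvature on $U$ as you write; one must instead work one-sidedly with $\overline\partial$-regularity up to the boundary, as Siu does. Your parenthetical alternative---running the argument on $M$ itself via the tangential Cauchy--Riemann complex, with conditions (i)--(ii) supplying exactly the $q$-convexity needed for the relevant CR-cohomology vanishing---is closer to what such a proof actually requires, and you would be better served promoting it from an aside to the main line of argument.
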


Concerning minimal sets, Adachi and Brinkschulte~\cite{AB3} proved the following result, which confirms a conjecture by 
Brunella~\cite{Br8}. This result, in particular, implies that the foliation $\mathcal F$ does not admit any nontrivial minimal sets.

\begin{theorem}[\cite{AB3}]
Let $X$ be a compact complex manifold of dimension $\ge 3$. Let $\mathcal F$
be a codimension one holomorphic foliation on $X$ with ample normal bundle $\mathcal{NF}$ .
Then every leaf of $\mathcal F$ accumulates to $\text{Sing}\,\mathcal F$.
\end{theorem}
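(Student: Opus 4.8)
\emph{Proof proposal.} The plan is to argue by contradiction and reduce to the nonexistence of a minimal set disjoint from $\text{Sing}\,\mathcal F$. If some leaf $L$ satisfied $\overline L\cap\text{Sing}\,\mathcal F=\varnothing$, then, $X$ being compact, $\overline L$ would be a compact $\mathcal F$-invariant set avoiding $\text{Sing}\,\mathcal F$, and a Zorn's lemma argument over nonempty closed invariant subsets (intersections of chains are nonempty by compactness) produces a minimal set $\mathcal M\subset\overline L$ with $\mathcal M\cap\text{Sing}\,\mathcal F=\varnothing$; so it is enough to rule out such an $\mathcal M$. I would then fix a neighbourhood $U$ of $\mathcal M$ on which $\mathcal F$ is a nonsingular codimension one holomorphic foliation. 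Since $N_{\mathcal F}$ is ample, it carries a smooth Hermitian metric whose Chern curvature is a positive $(1,1)$-form $\Theta$ on $X$; in particular $\Theta|_U>0$, and its restriction to every leaf of $\mathcal F$ is strictly positive.

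The endgame would be a maximum principle: it suffices to produce a single continuous function $\Psi$ on $\mathcal M$ (or a neighbourhood of it) that is strictly plurisubharmonic along the leaves. Indeed, by compactness $\Psi$ attains a maximum at some $p\in\mathcal M$; the leaf $L_p$ through $p$ is an immersed complex hypersurface contained in $\mathcal M$ by invariance, and $\Psi|_{L_p}$ then has an interior maximum, contradicting the maximum principle for strictly plurisubharmonic functions. This covers uniformly the case in which $\mathcal M$ is a single compact leaf; that case can also be dispatched directly, since a compact leaf would be a compact complex hypersurface with ample normal bundle $N_{\mathcal F}|_{\mathcal M}$, hence of positive self-intersection, which is incompatible with its being a leaf of a foliation, in the spirit of Prop.~\ref{p.compactleaf}.

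Such a $\Psi$ exists locally. Choosing submersions $f_\alpha\colon U_\alpha\to\C$ defining $\mathcal F$ with $df_\alpha=g_{\alpha\beta}\,df_\beta$ on overlaps, the cocycle $\{g_{\alpha\beta}\}$ represents $N_{\mathcal F}$, and local potentials $\varphi_\alpha$ of $\Theta$ satisfy $\varphi_\alpha-\varphi_\beta=\log|g_{\alpha\beta}|^2$ with $i\partial\overline\partial\varphi_\alpha=\Theta$; thus each $\varphi_\alpha$ is strictly plurisubharmonic along the leaves, while the transition terms $\log|g_{\alpha\beta}|^2$ are pluriharmonic. To glue the $\varphi_\alpha$ into a global $\Psi$ on $\mathcal M$ one must kill the cocycle $\{\log|g_{\alpha\beta}|^2\}$ over $\mathcal M$, i.e. write it as $u_\alpha-u_\beta$ with $u_\alpha$ leafwise pluriharmonic --- equivalently, trivialize a leafwise analogue of $c_1(N_{\mathcal F})$ over the laminated set $\mathcal M$.

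This last step is where I expect the main difficulty, and it is precisely where the \emph{global} ampleness of $N_{\mathcal F}$ over the compact $X$, together with $\dim X\ge 3$, must be used; mere positivity along nearby leaves is not enough, as the dimension $2$ counterexamples show. I see two natural ways to carry it out. One is a $\overline\partial$-argument in the style of Siu's proof of Theorem~\ref{t.1}: extend a leafwise $\overline\partial$-closed $(0,1)$-representative of the obstruction off a neighbourhood of $\mathcal M$, solve a $\overline\partial$-equation with compact support on the complement using the vanishing of a compactly supported cohomology group --- exactly where $n\ge 3$ enters --- and descend to the desired $\Psi$. The other, closer to Brunella's approach, is via harmonic currents: $\mathcal M$ carries a nonzero positive harmonic current $T$ of bidegree $(1,1)$ directed by $\mathcal F$, so that $\int_X\Theta\wedge T>0$ because $\Theta$ is leafwise positive and $T$ is directed by the leaves, while a self-intersection computation --- using $N_{\mathcal F}|_{L}=N_{L/X}$ and the absence of a compact leaf in $\mathcal M$ --- forces $\int_X\Theta\wedge T\le 0$, a contradiction. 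In either approach the soft parts are the reduction to minimal sets and the final maximum-principle (or current-pairing) step; the hard part is making the ampleness of $N_{\mathcal F}$ over all of $X$ do its work rather than just the leafwise positivity near $\mathcal M$.
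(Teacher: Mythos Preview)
The paper does not give a self-contained proof of this theorem; it only records the outline of Adachi--Brinkschulte's argument in two sentences: first produce a \emph{holomorphic connection} on $N_{\mathcal F}$ (an idea implicit in Ohsawa~\cite{Oh7}), and then solve a $\partial\overline\partial$-problem for the difference between the curvature of the Chern connection and that of the holomorphic connection, leading to a Siu-type contradiction. Your overall architecture---reduce to a nontrivial minimal set $\mathcal M$, aim for a leafwise strictly plurisubharmonic function, finish by the maximum principle---matches this, and your identification of the obstruction as the cocycle $\{\log|g_{\alpha\beta}|^2\}$ is exactly right. What you are missing is the specific device that kills it: the holomorphic connection. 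Its curvature is a holomorphic $2$-form, hence leafwise $\partial\overline\partial$-closed, and subtracting it from $\Theta$ produces a global $(1,1)$-form that one can hope to write as $i\partial\overline\partial\Psi$ near $\mathcal M$; this is the step where ampleness over the whole of $X$ and $\dim X\ge 3$ actually get used.

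Your first suggested route has a genuine gap. The compactly supported vanishing $H^2_{\mathrm{cpt}}(X\setminus\mathcal M,\mathcal O)=0$ that Siu uses is obtained from Serre duality and Cartan's Theorem~B, both of which require the complement to be Stein. In $\CP^n$ this is supplied by Fujita--Takeuchi, but on a general compact complex manifold (not even assumed K\"ahler here) there is no reason for $X\setminus\mathcal M$ to be Stein, so that vanishing is unavailable and the extension-and-solve step does not go through as stated. Your second route via a directed harmonic current is closer in spirit to Brunella's work, but the inequality $\int_X\Theta\wedge T\le 0$ is the whole point and cannot be asserted without a substantial argument; this is precisely the content you would need to supply, and it is not a soft step. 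In short: the skeleton is right, but the load-bearing idea---the holomorphic connection on $N_{\mathcal F}$---is absent, and the Siu-style substitute you propose fails for lack of Steinness of the complement.
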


Note that the theorem above is a generalization of Lins Neto’s  result on nonexistence of minimal sets in $\CP^n$, since
there the normal bundle of a holomorphic foliation is automatically positive. The approach in \cite{AB3}
and the follow-up paper by Adachi, Biard and Brinkschulte~\cite{ABB} combines Ohsawa’s and Siu’s ideas. The
first step is to produce a holomorphic connection on the normal bundle (this was implicit in \cite{Oh7}), and then
to obtain a contradiction similar to Siu by solving the $\partial\overline\partial$-problem for the difference of 
the curvatures
of the Chern connection and the holomorphic connection.

While the problem of nonexistence of Levi-flats $M$ in $\CP^2$ remains open, there are some partial answers to 
the question. For example,
Adachi-Brinkschulte~\cite{AB} gave restriction on the totally real Ricci
curvature of $M$, Mishchenko~\cite{Mi2} proved that a hypothetical Levi-flat cannot have a commutative first fundamental group $\pi_1(M)$, and Inaba and Mishchenko~\cite{IMi} showed that $\pi_1(M)$ cannot have nonexponential growth. 
Finally, Deroin and Dupont~\cite{DD} investigate Levi-flat hypersurfaces in complex surfaces. 

\bigskip

{\bf Acknowledgment.} The author would like to thank C. Denson Hill, L\'aszl\'o Lempert, Stefan Nemirovski, and Dror Varolin for valuable discussions concerning the content of this paper. The author is partially supported by the Natural Sciences and Engineering Research
Council of Canada.


\end{document}